\documentclass[11pt,bezier]{article}
\usepackage{graphicx}
\usepackage{float}
\usepackage{amsmath}
\usepackage{amsfonts,amsthm,amssymb}
\usepackage{graphics}
\usepackage{graphicx}

\newtheorem{lem}{Lemma}[section]
\newtheorem{thm}[lem]{Theorem}

\theoremstyle{definition}

\newcommand{\sq}{^{2}}

\newcommand{\inc}{\operatorname{inc}}

\begin{document}
	
	\title{On the maximally edge-connected square graphs with degree sequence conditions\footnote{The research is supported by Natural Science Foundation of Xinjiang Uygur Autonomous Region (2025D01E02) and National Natural Science Foundation of China (12261086).}}
	\author{Kai Li, Yingzhi Tian\footnote{Corresponding author. E-mail: tianyzhxj@163.com (Y. Tian).} \\
		{\small College of Mathematics and System Sciences, Xinjiang
			University, Urumqi, Xinjiang, 830046, PR China}}
	\date{}
	\maketitle
	
	\noindent{\bf Abstract}
Let $G=(V(G),E(G))$ be a finite connected simple graph. The edge-connectivity $\lambda(G)$ of $G$ is the minimum number of edges that must be removed to make $G$ disconnected. We call $G$ maximally edge-connected when $\lambda(G)=\delta(G)$, where $\delta(G)$ is the minimum degree of $G$.  The square graph $G^2$ of $G$ is the graph with vertex set $V(G)$, where two distinct vertices $u$ and $v$ are adjacent in $G^2$ precisely when their distance in $G$ is at most two. In this paper,  first we prove that if $\lambda(G)\geq4$ and  $\lambda(G^2)<\delta(G^2)$, then $\lambda(G^2)>\lambda(G)^{3/2}-\frac54\lambda(G)$, which extends the result in \cite{ChenXu} from the edge-connectivity range $4\leq\lambda(G)\leq7$ to $\lambda(G)\geq4$. Then we show that  $G^2$ is maximally edge-connected if one  of the following conditions hold: (i) $\delta(G)\geq\lfloor(n+2)/4\rfloor$, or $\delta(G)\leq\lfloor(n+2)/4\rfloor-1$ and $\sum_{i=1}^{2k}d_{n+1-i}\geq k(n-4)-2\delta(G)+3$, where $k$ is an integer satisfying $2\leq k\leq\delta(G)+2$ and $d_1\geq d_2\geq\cdots\geq d_n$ are the degrees of $G$; (ii) $d_{n-\delta(G)}+d_n\geq(3n-9)/4$.

	\par\noindent{\bf Keywords:}
	 Square graphs; Edge-connectivity; Maximally edge-connected graphs; 
	Degree sequences

	\section{Introduction}
	
In a communication network, stations and communication channels can be identified with vertices and edges, respectively. We therefore describe such a system by a finite simple graph $G=(V(G),E(G))$, where $V(G)$ and $E(G)$ represent the vertex set and the edge set of $G$, respectively. For every vertex $v\in V(G)$, its degree is denoted by $d_G(v)$. The minimum degree of $G$ is denoted by $\delta(G)$. For notation and terminology not introduced here, we follow \cite{BondyMurty}. The failure of several communication links may disconnect the network, so its resistance to link failures can be measured by the fewest links whose removal makes the network disconnected. The edge-connectivity  $\lambda(G)$ of $G$ is used to measure the reliability and fault-tolerance of corresponding network, namely, the least number of edges need to removal  to make $G$ disconnected. Whitney \cite{Whitney} showed that $\lambda(G)\leq\delta(G)$. So the graph $G$ satisfying $\lambda(G)=\delta(G)$ is called maximally edge-connected. Chartrand \cite{ChartrandCommunication} proved that a connected graph $G$ of order $n$ is maximally edge-connected if $\delta(G)\geq\lfloor n/2\rfloor$. A different degree condition was later given by Lesniak \cite{Lesniak}: whenever $d_G(u)+d_G(v)\geq n-1$ holds for every pair of distinct nonadjacent vertices $u$ and $v$, one has $\lambda(G)=\delta(G)$. Goldsmith and White \cite{GoldsmithWhite} derived another criterion involving disjoint vertex pairs. More precisely, if $G$ has $\lfloor n/2\rfloor$ mutually vertex-disjoint pairs $\{u_i,v_i\}$ satisfying $d_G(u_i)+d_G(v_i)\geq n$ for all $i=1,2,\ldots,\lfloor n/2\rfloor$, then $\lambda(G)=\delta(G)$. For more results, please see the survey by Hellwig and Volkmann \cite{HellwigVolkmannSurvey}.

In many networked structures, the interaction between two nodes may depend not only on a direct link, but also on whether a short path through intermediate nodes is available. For example, if at most one intermediate node is permitted, then two nodes whose distance in the original network is one or two can exchange information within two transmissions. This observation motivates an augmented network on the same node set, in which every pair of distinct nodes with distance at most two in the original network is connected. In graph-theoretic terminology, the graph obtained in this way is the square graph of $G$, written as $G^2$. Thus, $V(G^2)=V(G)$, and for any two distinct $u,v\in V(G)$, they are adjacent in $E(G^2)$ if and only if their distance in $G$ is at most two. Equivalently, $G^2$ retains all edges of $G$ and adds an edge between every nonadjacent pair that has a common neighbor in $G$. The added edges create additional communication routes and may therefore enhance local connectivity as well as resistance to link failures. The connectivity of square graphs has been studied in several recent works. Balbuena and Dankelmann \cite{BalbuenaDankelmann} proved that any connected graph $G$ of order $n$ satisfying $\delta(G)\geq\left\lfloor\frac{n+2}{4}\right\rfloor$ is maximally edge-connected. Moreover, they obtained a lower bound on $\lambda(G^2)$ in terms of $\lambda(G)$ when $G^2$ is not maximally edge-connected. Chen and Xu \cite{ChenXu} subsequently showed that if  the degree sum of any two distinct vertices of $G$ is at least $(n-1)/2$, then the square graph $G^2$ is maximally edge-connected. In addition, they improved the lower bound on $\lambda(G^2)$ in terms of $\lambda(G)$ in \cite{BalbuenaDankelmann}. More specifically, when $\lambda(G^2)<\delta(G^2)$, they proved that $\lambda(G^2)>\lambda(G)^{\frac32}-\frac54\lambda(G)$ for $4\leq\lambda(G)\leq7$, whereas for $\lambda(G)\geq8$ they obtained $\lambda(G^2)>\frac12\lambda(G)^{\frac32}+\frac12\lambda(G)-\lambda(G)^{\frac12}$. In this paper, we extend the condition $4\leq\lambda(G)\leq7$ to $\lambda(G)\geq4$ and
proved that whenever $\lambda(G)\geq4$ and $\lambda(G^2)<\delta(G^2)$, one has $\lambda(G^2)>\lambda(G)^{\frac32}-\frac54\lambda(G)$.
	
Conditions involving the smaller terms of a degree sequence have also been used to guarantee a graph to be maximally edge-connected. Consider a connected graph $G$ with $n$ vertices, whose degrees are indexed so that $d_1\geq d_2\geq\cdots\geq d_n$.  Dankelmann and Volkmann \cite{DankelmannVolkmannDegree} obtained $\lambda(G)=\delta(G)$ in either of the following two situations: $\delta(G)\geq\lfloor n/2\rfloor$, or $\delta(G)\leq\lfloor n/2\rfloor-1$ and there is an integer $k$ with $2\leq k\leq\delta(G)$ for which $\sum_{i=1}^{2k}d_{n+1-i}\geq kn-3$. Hellwig and Volkmann \cite{HellwigVolkmann2003} later improved the second case by replacing this degree-sum bound with $\sum_{i=1}^{2k}d_{n+1-i}\geq\max\{k(n-1)-1,(k-1)n+2\delta(G)-1\}$, while keeping the same ranges for $\delta(G)$ and $k$. In this paper, we derive analogous degree sequence conditions for $G^2$ to be maximally edge-connected. We prove that  $G^2$ is maximally edge-connected  whenever either $\delta(G)\geq\lfloor(n+2)/4\rfloor$, or $\delta(G)\leq\lfloor(n+2)/4\rfloor-1$ and there is an integer $k$ from the range $2\leq k\leq\delta(G)+2$ such that  $\sum_{i=1}^{2k}d_{n+1-i}\geq k(n-4)-2\delta(G)+3$. In addition, we prove that  $G^2$ is maximally edge-connected if $\delta(G)\geq2$ and $d_{n-\delta(G)}+d_n\geq(3n-9)/4$.
	
The subsequent sections are arranged as follows.    Section~2 sets up the notation and terminology and records the auxiliary results. Section~3 develops a lower bound for $\lambda(G^2)$ in terms of $\lambda(G)$. Section~4 is devoted to degree sequence conditions under which $\lambda(G^2)=\delta(G^2)$. The final section collects the principal conclusions and discusses several possible directions for future work.

	\section{Preliminaries}

Throughout this paper, $G=(V(G),E(G))$ represents a finite simple graph, where $V(G)$ and $E(G)$ refer to the vertex set and the edge set of $G$, respectively. The order of $G$ is given by $n(G)=|V(G)|$. The expressions $pq$ and $qp$ represent the same edge with endvertices $p$ and $q$. For disjoint vertex subsets $U,W\subseteq V(G)$, the collect of edges between them is denoted by $E_G(U,W)=\{pq\in E(G):p\in U,\ q\in W\}$. For $z\in V(G)$,  its neighborhood is $N_G(z)=\{w\in V(G):zw\in E(G)\}$ and its closed neighborhood is $N_G[z]=N_G(z)\cup\{z\}$. The degree $d_G(z)$ of $z$ is the cardinality $|N_G(z)|$. We use $N_U(z)=N_G(z)\cap U$ for the neighbors of $z$ belonging to $U$. Arranging the degrees of all vertices from largest to smallest produces the degree sequence of $G$.
	
Fix a minimum edge-cut $S\subseteq E(G^2)$. Write $H_1,H_2$ for the two components of $G^2-S$. Let $V_i=V(H_i)$ and $G_i=G[V_i]$ for $i=1,2$. Restricting $S$ to the original edges of $G$ gives $S'=S\cap E(G)$. The deletion of $S'$ leaves a spanning subgraph of $G^2-S$; hence $G-S'$ is disconnected. Its components lie entirely within one of the two vertex sets, and those contained in $V_i$ together form $G_i$. To distinguish boundary vertices from interior vertices, let $A$ denote the endvertices in $V_1$ of the edges in $S$, and let $B$ denote their endvertices in $V_2$. Vertices outside these boundary sets are termed interior vertices, giving $A_0=V_1-A$ and $B_0=V_2-B$ for $H_1$ and $H_2$, respectively. For the edge-cut $S'$ of $G$, we use $A'$ and $B'$ to record its endvertices in $G_1$ and $G_2$, respectively. The inclusion $S'\subseteq S$ then yields $A'\subseteq A$ and $B'\subseteq B$.

The subsequent proofs draw on the following four auxiliary results.
  
 \begin{thm}[\cite{ChartrandCommunication}]
 	If the minimum degree of a graph $G$ on $n$ vertices is at least $\frac{n-1}{2}$,
 	 then $G$ is maximally edge-connected.
 \end{thm}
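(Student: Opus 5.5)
Fix a minimum edge-cut $F$ of $G$ and let $X$ and $Y=V(G)-X$ be the two sides of $G-F$, so that $|F|=\lambda(G)$. The plan is to suppose $\lambda(G)<\delta(G)$ and derive a contradiction from $\delta(G)\geq\frac{n-1}{2}$. Since $n=|X|+|Y|$, one of the two sides has at most $\lfloor n/2\rfloor$ vertices. Without loss of generality let this be $X$, so $|X|\leq n/2$.

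The key step is to show that $X$ must be large. We count the edges leaving $X$ through degrees:
\[
|X|\,\delta(G)\leq\sum_{x\in X}d_G(x)=2|E(G[X])|+|F|\leq |X|(|X|-1)+|F|.
\]
Using $|F|=\lambda(G)\leq\delta(G)-1$, this gives $|X|\,\delta(G)\leq |X|(|X|-1)+\delta(G)-1$, which rearranges to
\[
(|X|-1)\bigl(\delta(G)-|X|+1\bigr)\leq -1.
\]
Since $|X|\geq1$, the left side would be nonnegative if $|X|=1$, a contradiction. Therefore $|X|\geq2$, and then $\delta(G)-|X|+1\leq -1/(|X|-1)<0$. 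Because all quantities are integers, this means $|X|\geq\delta(G)+2$.

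Combining the two bounds on $|X|$ gives
\[
\delta(G)+2\leq |X|\leq n/2,
\]
so $\delta(G)\leq n/2-2<\frac{n-1}{2}$. This contradicts the hypothesis, hence $\lambda(G)=\delta(G)$.

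The only delicate points are the integer step from the strict inequality $\delta(G)-|X|+1<0$ to $|X|\geq\delta(G)+2$, and the separate treatment of the case $|X|=1$. Apart from these, the argument is routine.
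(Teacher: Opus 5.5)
Your strategy works, but the rearrangement step has an algebra error. From $|X|\,\delta(G)\le |X|(|X|-1)+\delta(G)-1$ you get $\delta(G)(|X|-1)\le |X|(|X|-1)-1$, that is,
\[
(|X|-1)\bigl(\delta(G)-|X|\bigr)\le -1 .
\]
Your factor $\delta(G)-|X|+1$ adds an extra $|X|-1$ to the left-hand side, so your stated inequality does not follow. As a result, the bound $|X|\ge\delta(G)+2$ is not only unjustified but false in general. Take two disjoint copies of $K_{m+1}$ ($m\ge 2$) joined by a single edge: then $\delta=m$ and $\lambda=1<\delta$, yet both sides of the minimum cut have exactly $m+1=\delta+1$ vertices. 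Your inequality would read $0\le -1$ there.

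The corrected inequality still excludes $|X|=1$, since the left side is then $0$. For $|X|\ge 2$ it forces $\delta(G)-|X|<0$, hence $|X|\ge\delta(G)+1$. This is precisely the Dankelmann--Volkmann bound recorded as Lemma~2.4 of the paper. It is enough: $\delta(G)+1\le |X|\le n/2$ gives $\delta(G)\le (n-2)/2<(n-1)/2$, the desired contradiction. So once you fix the factorization and replace $\delta(G)+2$ and $n/2-2$ by $\delta(G)+1$ and $n/2-1$, your proof is complete.

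The paper itself gives no proof of this statement; it only cites Chartrand. Your repaired argument amounts to Lemma~2.4 combined with the fact that one side of the cut has at most $n/2$ vertices. The example above, with $n=2m+2$ and $\delta=n/2-1$, shows that the bound $|X|\ge\delta(G)+1$ cannot be improved.
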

  
  \begin{thm}[\cite{BalbuenaDankelmann}]
  	If the  minimum degree of a connected graph $G$ on $n$ vertices is at least
  	 $\left\lfloor\frac{n+2}{4}\right\rfloor$, then
  	 $G^2$ is maximally edge-connected.
  \end{thm}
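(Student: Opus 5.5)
We prove the statement of Balbuena and Dankelmann by contradiction. Fix the notation of Section~2: let $S$ be a minimum edge-cut of $G^2$ with $|S|=\lambda(G^2)<\delta(G^2)$, let $V_1,V_2$ be the vertex sets of the two sides, let $A,B$ be the boundary sets, and write $\delta=\delta(G)$. We aim to show $|S|\ge\delta(G^2)$, which contradicts $|S|<\delta(G^2)$.

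\textbf{Step 1: each side is not too small.} For $x\in V_1$ we have $d_{G^2}(x)\le (|V_1|-1)+|N_{V_2}^{G^2}(x)|$. Summing over $V_1$ gives
\[
|V_1|\,\delta(G^2)\le |V_1|(|V_1|-1)+|S|<|V_1|(|V_1|-1)+\delta(G^2).
\]
Hence $|V_1|>\delta(G^2)$; this is the standard argument behind the first theorem of Section~2. The same holds for $V_2$.

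\textbf{Step 2: compare $\delta(G^2)$ with $\delta$.} Since $G$ is connected and $n$ is large relative to $\delta$, every vertex $v$ has $d_{G^2}(v)\ge |N_G(v)|+|N_G(N_G(v))\setminus N_G[v]|$. Crude bounds like $d_{G^2}(v)\ge 2\delta$ follow whenever some neighbour of $v$ has a neighbour outside $N_G[v]$. This is where the threshold $\lfloor (n+2)/4\rfloor$ should enter.

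\textbf{Step 3: exploit interior vertices.} A vertex $a\in A_0$ (interior of $V_1$) has no $G^2$-neighbour in $V_2$, so $N_G[a]\subseteq A_0\cup A$, and no neighbour of $a$ lies in $A'$. Symmetrically for $B_0$. First rule out the case that one side consists entirely of boundary vertices. If $A_0=\emptyset$, then every vertex of $V_1$ is incident to an edge of $S$, so $|S|\ge |V_1|>\delta(G^2)$ by Step~1, a contradiction. So $A_0,B_0\neq\emptyset$.

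Pick $a\in A_0$. Then $N_G[a]$ lies in $V_1$. Moreover, for each neighbour $y$ of $a$, all $G$-neighbours of $y$ lie in $V_1$, since otherwise $a$ would be adjacent in $G^2$ to a vertex of $V_2$. Thus the ball of radius $2$ around $a$ lies in $V_1$.

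\textbf{Step 4: count edges of $S$ from $G$-edges crossing.} Since $G$ is connected, $S'\ne\emptyset$. Take an edge $uv\in S'$ with $u\in A'$ and $v\in B'$. Every neighbour of $u$ in $V_1$ is $G^2$-adjacent to $v$, and every neighbour of $v$ in $V_2$ is $G^2$-adjacent to $u$. Also $u$ is adjacent in $G^2$ to every vertex of $N_G(v)\cap V_2$. This gives
\[
|S|\ \ge\ |N_G(u)\cap V_1|+|N_G(v)\cap V_2|+\ldots
\]
Combining over a matching or star in $S'$, the count should reach about $2\delta-2$ plus contributions. Then the distance-$2$ structure around $a$ and $b$ forces $|V_1|,|V_2|\ge \delta+1$ with neighbourhoods of size about $2\delta$. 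We then compare with $n\le 4\delta+\ldots$ coming from $\delta\ge\lfloor (n+2)/4\rfloor$, i.e.\ $n\le 4\delta+1$.

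\textbf{Main obstacle.} The hard step is the final counting. We must show that, with $n\le 4\delta+1$, the two sides (each containing a full radius-$2$ ball around an interior vertex) leave so few vertices that either $\delta(G^2)$ is small enough to be matched by $|S|$, or $|S|$ is large. The approach I would try first is to bound $\delta(G^2)\le n-|V_j|+\ldots$ via a boundary vertex, and to bound $|S|$ below by summing over $u\in A'$ the quantity $|N_{V_2}^{G^2}(u)|$.
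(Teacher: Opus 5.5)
\paragraph{Review.} There is a genuine gap. Note first that the paper quotes this theorem from Balbuena and Dankelmann without proof, so your proposal has to stand on its own. Your Steps 1 and 3 are correct but routine: $|V_i|\ge \delta(G^2)+1$, interior vertices exist, and the radius-$2$ ball around an interior vertex stays on its side. The only step that would actually use $n\le 4\delta+1$ is left open as the ``main obstacle''.

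Worse, the heuristic in Step 2 is false. A neighbour of $v$ having a neighbour outside $N_G[v]$ only gives $d_{G^2}(v)\ge\delta+1$, not $2\delta$. For example, take two copies of $K_{\delta+1}$ joined by a single edge. Then $n=2\delta+2\le 4\delta+1$, yet every clique vertex off the bridge has $G^2$-degree $\delta+1$. So the threshold cannot enter through a lower bound on $\delta(G^2)$. Your alternative idea of bounding $\delta(G^2)$ from above via a boundary vertex points in the wrong direction as well.

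What is missing is a lower bound on $|S|$ in terms of $\delta=\delta(G)$ alone, namely $|S|\ge 2\delta-1$. The tool is that every closed neighbourhood $N_G[x]$ is a clique in $G^2$, so it contributes $|N_G[x]\cap V_1|\cdot|N_G[x]\cap V_2|$ edges to $S$.
\begin{itemize}
\item Choose $u\in A'$ having a $G$-neighbour in $V_1\setminus A'$: take the predecessor of the first $A'$-vertex on a shortest path from an interior vertex, which exists by your Step 3. Choose $v\in B'$ analogously. Then $u$ has a neighbour in $V_1$ that is not adjacent to $v$, and vice versa.
\item Count the crossing pairs in the two cliques $N_G[u]$ and $N_G[v]$, and subtract their overlap $|N_G[u]\cap N_G[v]\cap V_1|\cdot|N_G[u]\cap N_G[v]\cap V_2|$.
\item This yields $|S|\ge d_G(u)+d_G(v)-1\ge 2\delta-1$, whether or not $uv\in E(G)$. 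It is exactly the computation in Cases 1 and 2 of the proof of Theorem 4.3, which never uses that theorem's degree hypothesis.
\end{itemize}
Your single-edge count in Step 4 cannot replace this: $|N_G(u)\cap V_1|$ and $|N_G(v)\cap V_2|$ can both be tiny. The choice of $u,v$ also matters. If $uv\in E(G)$ and every $V_1$-neighbour of $u$ is a common neighbour of $u$ and $v$, the clique count can drop below $d_G(u)+d_G(v)-1$.

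Once $|S|\ge 2\delta-1$ is in hand, your Step 1 finishes the proof. For $i=1,2$ we get $|V_i|\ge\delta(G^2)+1\ge |S|+2\ge 2\delta+1$, so $n\ge 4\delta+2$. This contradicts $n\le 4\delta+1$, which is equivalent to $\delta\ge\lfloor (n+2)/4\rfloor$. The graphs $G_m$ after Theorem 4.2 show that both inequalities are attained.
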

  
  \begin{lem}[{\cite{Hamidoune}}]
  	If $S$ is a minimum edge-cut of a connected graph $G$ and $|S|<\delta(G)$, then every component of $G-S$ contains at least one interior vertex.
  \end{lem}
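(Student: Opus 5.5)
The statement is Hamidoune's lemma: if $S$ is a minimum edge-cut of a connected graph $G$ with $|S|<\delta(G)$, then every component of $G-S$ contains an interior vertex, that is, a vertex not incident with any edge of $S$. The plan is a counting argument by contradiction.

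First, since $S$ is a minimum edge-cut, $G-S$ has exactly two components. If there were a third, the edges of $S$ joining one component $C$ to the rest would already form a proper subset of $S$ whose removal disconnects $G$, contradicting minimality. So let $X$ be one of the two components, with $\bar X=V(G)-X$, so that $S=E_G(X,\bar X)$. Suppose, for contradiction, that every vertex of $X$ is incident with some edge of $S$, and put $m=|X|$.

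Next we bound the edges of $S$ from below in two ways. Each $x\in X$ has at most $m-1$ neighbours inside $X$, so it sends at least $d_G(x)-(m-1)\ge \delta(G)-m+1$ edges into $S$. Every vertex of $X$ is an endpoint of an edge of $S$, so each sends at least one edge, and no edge of $S$ is counted twice. Hence
\[|S|\ \ge\ \sum_{x\in X}\max\{1,\ \delta(G)-m+1\}.\]

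We then split into two cases according to $m$. If $m\le \delta(G)$, then $\delta(G)-m+1\ge 1$, and the bound gives $|S|\ge m(\delta(G)-m+1)$. This is a concave function of $m$ on $1\le m\le\delta(G)$, so its minimum is attained at an endpoint, and both $m=1$ and $m=\delta(G)$ give the value $\delta(G)$. Thus $|S|\ge\delta(G)$, a contradiction. If $m\ge\delta(G)+1$, we use only that each vertex contributes at least $1$, so $|S|\ge m>\delta(G)$, again a contradiction.

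Therefore $X$ contains a vertex incident with no edge of $S$. Applying the same argument to the other component completes the proof. The argument is elementary, and the only step needing care is the concavity check at the endpoints in the first case; no real obstacle is expected.
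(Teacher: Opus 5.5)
The paper gives no proof of this lemma. It is quoted from Hamidoune and used as a black box, so there is no in-paper argument to compare with. Your counting argument is correct and is the standard proof. A few remarks follow.

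\emph{Reduction to two components.} As written, ``the edges of $S$ joining one component $C$ to the rest'' is too loose. For a badly chosen $C$, for instance a component adjacent to both of the others, $E_G(C,V(G)-C)$ can be all of $S$. The fix is to pick any $e\in S$ and a component $C$ containing neither end of $e$. Such a $C$ exists, since $e$ meets at most two of the at least three components. Then $E_G(C,V(G)-C)\subseteq S-\{e\}$ still disconnects $G$, contradicting minimality.

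\emph{The identity $S=E_G(X,\bar X)$.} This says no edge of $S$ lies inside a component, and it also follows from minimality: deleting such an edge from $S$ leaves a smaller disconnecting set. It is worth stating, because it is what turns ``$x$ is incident with an edge of $S$'' into ``$x$ has a neighbour in $\bar X$''.

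\emph{A stronger reading of your first case.} Your case $m\le\delta(G)$ never uses the contradiction hypothesis. It shows unconditionally that every component of $G-S$ has at least $\delta(G)+1$ vertices, which is exactly Lemma~2.4 of the paper. Lemma~2.3 then follows by pigeonhole: the $|S|<\delta(G)<|X|$ edges of $S$ cover at most $|S|$ vertices of $X$. So your argument proves both auxiliary lemmas at once.

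The concavity check can also be replaced by the identity $m(\delta(G)-m+1)-\delta(G)=(m-1)(\delta(G)-m)\geq 0$ for $1\leq m\leq\delta(G)$.
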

  
\begin{lem}[\cite{DankelmannVolkmannDegree}]
	Let $S$ be a minimum edge-cut of a connected graph $G$. If $\lambda(G)<\delta(G)$, then every component of $G-S$ contains at least $\delta(G)+1$ vertices.
\end{lem}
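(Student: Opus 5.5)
The plan is to prove this by a direct counting argument on one side of the cut, so Lemma 2.3 is not needed. First, since $S$ is a minimum edge-cut of the connected graph $G$, the graph $G-S$ has exactly two components. If there were three or more components, restoring the edges of $S$ joining one particular component to the rest would give a smaller edge-cut, contradicting minimality. Let $X$ be one of these components and put $t=|X|$. Suppose for contradiction that $t\le\delta(G)$.

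Next, count the edges of $S$ incident with $X$. Every edge of $G$ leaving $X$ lies in $S$. A vertex $x\in X$ has at most $t-1$ neighbours inside $X$, so it has at least $d_G(x)-(t-1)\ge\delta(G)-t+1\ge 1$ neighbours outside $X$. Summing over $X$ gives
\[
\lambda(G)=|S|\ \ge\ t\bigl(\delta(G)-t+1\bigr).
\]

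Finally, consider $f(t)=t(\delta(G)-t+1)$ on the integer range $1\le t\le\delta(G)$. It is a concave quadratic in $t$, so its minimum over this interval is attained at an endpoint. Both endpoints give the same value, $f(1)=f(\delta(G))=\delta(G)$. Hence $\lambda(G)\ge\delta(G)$, which contradicts the hypothesis $\lambda(G)<\delta(G)$. Therefore $|X|\ge\delta(G)+1$, and the same argument applies to the other component.

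The only step requiring care is the concavity bound, in particular checking that the endpoint $t=\delta(G)$ is allowed and gives exactly $\delta(G)$. It is routine, so I do not expect a genuine obstacle.
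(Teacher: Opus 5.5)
The paper does not prove this lemma; it quotes it from Dankelmann and Volkmann, so there is no in-paper argument to compare with. Your proof is correct and is the standard argument for this fact. Since $X$ is a component of $G-S$, we have $E_G(X,V(G)\setminus X)\subseteq S$. Each vertex of $X$ sends at least $\delta(G)-t+1$ edges out of $X$. Finally, $t(\delta(G)-t+1)-\delta(G)=(t-1)(\delta(G)-t)\ge 0$ for $1\le t\le\delta(G)$, which forces $\lambda(G)\ge\delta(G)$.

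One small remark concerns your first paragraph, the claim that $G-S$ has exactly two components. It is never used, because $E_G(X,V(G)\setminus X)\subseteq S$ holds for every component $X$, however many there are. As worded it is also slightly inaccurate: restoring all edges of $S$ at a single component can reconnect $G$. For example, take $G=K_3$ and $S=E(K_3)$; restoring the two edges at any vertex leaves a path. The correct version restores only the edges of $S$ between two components that are joined by an edge. Alternatively, it keeps only $E_G(C,V(G)\setminus C)$ for a component $C$ that misses some edge of $S$. You can simply delete that paragraph.
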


	\section{A lower bound on $\lambda(G\sq)$ in terms of $\lambda(G)$}
	
   \begin{thm}
	If $G$ is a connected graph with $\lambda(G)\geq4$, then either $\lambda(G\sq)=\delta(G\sq)$ or
	$\lambda(G\sq)>\lambda(G)^{\frac32}-\frac54\lambda(G)$.
   \end{thm}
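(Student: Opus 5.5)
Set $\lambda=\lambda(G)$ and assume $\lambda(G\sq)<\delta(G\sq)$. I would work with the decomposition fixed in Section~2: a minimum edge-cut $S$ of $G\sq$, the sides $V_1,V_2$, the cut $S'=S\cap E(G)$ of $G$ with ends $A'\subseteq A$ and $B'\subseteq B$, and the interior sets $A_0,B_0$. By Lemma 2.3 applied to $G\sq$, both $A_0$ and $B_0$ are nonempty. Since $S'$ is an edge-cut of $G$, we have $|S'|\ge\lambda$. The aim is to show $|S|>\lambda^{3/2}-\frac54\lambda$ by counting, for each edge $ab\in S'$ with $a\in A'$ and $b\in B'$, the square edges that $S$ is forced to contain.

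The basic counting step goes as follows. For $a\in A'$, let $p_a=|N_{B}(a)\cap B'|$ be its number of $G$-neighbours across the cut, and for $b\in B'$ let $q_b=|N_A(b)|$. Every $G$-neighbour $x$ of $a$ lying in $V_1$ is at distance at most two from every $G$-neighbour of $a$ in $V_2$, so each such pair gives an edge of $S$. The same holds for $b$ with the roles of the sides swapped. An interior vertex of $A_0$ has no square edge to $V_2$, so its whole $G$-neighbourhood stays in $V_1$. A path from $A_0$ to $A'$ then shows that many vertices of $A'$ have $G$-neighbours inside $V_1$. More precisely, $d_G(a)\ge\delta(G)\ge\lambda$, so either $p_a$ is large, or $a$ has many neighbours inside $V_1$, each producing $p_a$ cut edges. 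This gives, for each $a$, a contribution to $|S|$ of at least about $p_a+p_a(\lambda-p_a)$ (roughly: edges from $a$ itself, plus the products). I would split $A'$ into vertices with $p_a\ge\sqrt\lambda$ (``heavy'') and the rest (``light''), and do the same on the $B'$ side.

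Next I would run a case analysis on whether some vertex is heavy. If all edges of $S'$ are incident to light vertices on both sides, then $|A'|,|B'|\ge|S'|/\sqrt\lambda\ge\sqrt\lambda$. Each light vertex contributes at least about $\lambda-\sqrt\lambda$ square edges, and summing over the $\ge\sqrt\lambda$ light vertices gives about $\lambda^{3/2}-\lambda$. If some $a$ is heavy with $p=p_a\ge\sqrt\lambda$, I would use the product bound $p(\lambda-p)$, together with the contributions of the far endpoints $b\in N(a)\cap B'$, each of which sees $a$ and $a$'s inner neighbours. The function $p(\lambda-p)+(\text{terms})$ is concave in $p$, so its minimum sits at an endpoint of the admissible range. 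Checking the endpoints $p=\sqrt\lambda$ and $p=\lambda$ should give the bound $\lambda^{3/2}-\frac54\lambda$, where the $\frac54$ comes from rounding $\sqrt\lambda$ and from the overlap corrections. In the case $p=\lambda$, $a$ has all its neighbours in $V_2$; then Lemma 2.4 and the interiors $A_0$, $B_0$ force other boundary vertices, which I would handle by recursion on the remaining cut edges.

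The main obstacle is avoiding double counting. One square edge $xy$ can arise from several cut edges $ab$, since $x$ and $y$ may have several common neighbours. Chen and Xu apparently needed $\lambda\le7$ precisely to control such overlaps. My first attempt would be to charge each square edge $xy\in S$ to a single canonical midpoint: the vertex of smallest index in $N_G(x)\cap N_G(y)$, or the edge itself if $xy\in S'$. I would then count, for each midpoint $v$, only the pairs (neighbour of $v$ in $V_1$, neighbour of $v$ in $V_2$) assigned to it, giving the exact identity $|S|\ge\sum_v |N_{V_1}(v)|\cdot|N_{V_2}(v)|/\mu$, where $\mu$ bounds the multiplicity. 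I would then use $\lambda\ge4$ (so $\sqrt\lambda\ge2$) to guarantee that the inner neighbours of distinct light vertices give genuinely distinct pairs, and optimize $\sum_v N_{V_1}(v)N_{V_2}(v)$ subject to $\sum_v |N_{V_2}(v)|\ge\lambda$ for $v\in V_1$ and $d(v)\ge\lambda$. That optimization is a convex minimization, and it yields $\lambda^{3/2}-\frac54\lambda$ with strict inequality because the extremal fractional configuration is not integral.
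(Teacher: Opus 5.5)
Your single-vertex estimate $|S|\ge p_a(d_G(a)-p_a)+p_a$ is the right tool when some boundary vertex has between $\sqrt\lambda$ and $\lambda-\sqrt\lambda$ edges of $S'$. By concavity the minimum is at an endpoint, and both endpoints exceed $\lambda^{3/2}-\frac54\lambda$.

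The genuine gap is the range $p_a>\lambda-\sqrt\lambda$. There the estimate is only of order $d_G(a)$; it equals $d_G(a)$ at $p_a=d_G(a)$. So your endpoint check at $p=\lambda$ gives only $\lambda$, which is below $\lambda^{3/2}-\frac54\lambda$ once $\lambda\ge6$. The ``contributions of the far endpoints'' are never quantified, and no single heavy vertex can supply the missing factor $\sqrt\lambda$; it has to come from many vertices. ``Recursion on the remaining cut edges'' is not an argument.

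The fallback in your last paragraph does not work either. Nothing bounds the multiplicity $\mu$: many vertices of $A'$ may share the same inner neighbour $x$ and the same cross neighbour $b$, so $xb$ has many midpoints. Also, $\lambda\ge4$ does not make the pairs (inner neighbour of $a$, cross neighbour of $a$) coming from distinct light vertices $a$ distinct.

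Your all-light case has the same double counting as written, but it can be repaired by anchoring at $a$ itself. Pick a cross neighbour $b$ of $a$. Its more than $\lambda-\sqrt\lambda$ neighbours in $V_2$ are all joined to $a$ in $G^2$, and these edges are distinct for distinct $a\in A'$. Together with $|A'|>\sqrt\lambda$ this gives $|S|>\lambda^{3/2}-\lambda$.

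The missing idea is how to find $\sqrt\lambda$ contributing vertices when heavy vertices are present; the paper gets them from a second edge-cut of $G$. Let $A^+$, $B^+$ be the boundary vertices with more than $\lambda-\sqrt\lambda$ edges of $S'$, and $A^-$, $B^-$ those with fewer than $\sqrt\lambda$. Move the heavy vertices across: with $W_1=(V_1-A^+)\cup B^+$ and $W_2=(V_2-B^+)\cup A^+$, the set $T=E_G(W_1,W_2)$ separates $A_0$ from $B_0$, so $|T|\ge\lambda$. Since $T=E_G(A^+,B^+)\cup E_G(A^-,B^-)\cup E_G(A^+,V_1-A^+)\cup E_G(B^+,V_2-B^+)$, one of these four families has at least $\lambda/4$ edges. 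AM--GM then gives at least $\sqrt\lambda$ vertices at its ends.

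Each case is counted with edges of $S$ anchored at a fixed endpoint. For instance, every inner neighbour $x$ of some $y\in A^+$ is joined in $G^2$ to all the more than $\lambda-\sqrt\lambda$ cross neighbours of $y$. These edges are distinct for distinct $x$ and disjoint from the edges of $S'$ at $A^+$.

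In the $A^+$--$B^+$ and $A^-$--$B^-$ cases, the edges counted at the two ends can overlap. Discarding up to $a$ edges per vertex on one side, where $a<\sqrt\lambda$ is the number of ends on the other side, yields $\lambda^{3/2}-\lambda-a\sqrt\lambda+a^2\ge\lambda^{3/2}-\frac54\lambda$. This overlap correction, not rounding of $\sqrt\lambda$, is where the constant $\frac54$ comes from.
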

	
	\noindent{\bf Proof.}
	Write $\lambda=\lambda(G)$. Assume $\lambda(G^2)<\delta(G^2)$. Choose an edge-cut $S$ of $G^2$ with $|S|=\lambda(G^2)$. We continue to use the notation $H_1$, $H_2$, $V_1$, $V_2$, $S'$, $A'$, and $B'$ defined in Section~2. For each $u\in A'\cup B'$, set $\operatorname{inc}(u,S')=\bigl|\{e\in S':u\text{ is an endvertex of }e\}\bigr|$.
	
	\noindent{\bf Case 1.}
	There exists a vertex $u\in A'\cup B'$ such that $\sqrt{\lambda}\leq\inc(u,S')\leq\lambda-\sqrt{\lambda}$. 
	
	 By symmetry, we only need to consider the case $u\in A'$. Choose $v\in N_G(u)\cap V_1$ and $w\in N_G(u)\cap V_2$. We have $u\in N_G(v)\cap N_G(w)$. Therefore, they are adjacent in the square graph $G\sq$, that is, $vw\in E(G\sq)$. Since $v$ and $w$ lie in different components of $G^2-S$ and $vw\in E(G^2)$, we have $vw\in S$. Therefore,
	\[
	\begin{aligned}
		|S|&\geq |N_G(u)\cap V_1|\,|N_G(u)\cap V_2|+\inc(u,S')\\
		&=(d_G(u)-\inc(u,S'))\inc(u,S')+\inc(u,S')\\
		&\geq (\lambda-\inc(u,S'))\inc(u,S')+\inc(u,S').
	\end{aligned}
	\]
	For $\sqrt{\lambda}\leq x\leq\lambda-\sqrt{\lambda}$, consider $f(x)=(\lambda-x)x+x$. Its minimum on this interval occurs at $x=\sqrt{\lambda}$. We therefore obtain
	$|S|\geq\lambda^{\frac32}-\lambda+\sqrt{\lambda}>\lambda^{\frac32}-\frac54\lambda$.
	
	\noindent{\bf Case 2.}
	For every $v\in A'\cup B'$, either
	$\inc(v,S')<\sqrt{\lambda}$ or
	$\inc(v,S')>\lambda-\sqrt{\lambda}$.
	
	Partition $A'$ according to these two alternatives: let $A^+$ consist of the vertices $v\in A'$ for which $\inc(v,S')>\lambda-\sqrt{\lambda}$, and let $A^-$ consist of those satisfying $\inc(v,S')<\sqrt{\lambda}$. Define $B^+$ and $B^-$ in the same way. Hence $A^+\cup A^-=A'$ and $B^+\cup B^-=B'$. Put
	\[
	W_1=(V_1-A^+)\cup B^+,
	\qquad
	W_2=(V_2-B^+)\cup A^+,
	\qquad
	T=E_G(W_1,W_2).
	\]
	Because no edge of $G$ joins $V_1-A'$ to $V_2-B'$, we have
	$E_G(V_1-A^+,V_2-B^+)=E_G(A^-,B^-)$. Hence
	\[
	T=E_G(A^+,B^+)\cup E_G(A^-,B^-)\cup E_G(A^+,V_1-A^+)\cup E_G(B^+,V_2-B^+).
	\]
	The set $T$ disconnects $G$, so $|T|\geq\lambda$. Therefore, at least one of
	$|E_G(A^+,B^+)|$, $|E_G(A^-,B^-)|$,
	$|E_G(A^+,V_1-A^+)|$ and
	$|E_G(B^+,V_2-B^+)|$
	is at least $\lambda/4$.
	
	\noindent{\bf Subcase 2.1.}
	$|E_G(A^+,B^+)|\geq\lambda/4$.
	
	Since $|E_G(A^+,B^+)|\leq|A^+||B^+|$, this assumption gives
	$|A^+||B^+|\geq\lambda/4$. It follows that
	$|A^+|+|B^+|\geq\sqrt{\lambda}$. By symmetry, suppose that
	$|A^+|\geq|B^+|$. In particular,
	$|A^+|\geq\sqrt{\lambda}/2$. Choose $w'\in B^+$ so that
	\[
	\inc(w',S')=\min\{\inc(w,S'):w\in B^+\}.
	\]
	
	\noindent\textbf{Subcase 2.1.1.}
	$|A^+|<\inc(w',S')$.
	
	Consider first the situation $|A^+|<\sqrt{\lambda}$. Then
	\[
	\begin{aligned}
		|S|
		&\geq \sum_{v\in A^+}\inc(v,S')+\sum_{w\in B^+}\bigl(\inc(w,S')-|A^+|\bigr)\\
		&>|A^+|(\lambda-\sqrt{\lambda})+|B^+|\bigl(\inc(w',S')-|A^+|\bigr)\\
		&>|A^+|(\lambda-\sqrt{\lambda})+(\sqrt{\lambda}-|A^+|)(\lambda-\sqrt{\lambda}-|A^+|)\\
		&=\lambda^{\frac32}-\lambda-|A^+|\sqrt{\lambda}+|A^+|^2\\
		&=\lambda^{\frac32}+\left(|A^+|-\frac{\sqrt{\lambda}}{2}\right)^2
		-\frac54\lambda\\
		&\geq \lambda^{\frac32}-\frac54\lambda.
	\end{aligned}
	\]
	Assume next that $|A^+|\geq\sqrt{\lambda}$. Using only the edges of $S'$ incident with vertices of $A^+$ gives
	\[
	|S|\geq\sum\limits_{v\in A^+}\operatorname{inc}(v,S')
	>|A^+|(\lambda-\sqrt{\lambda})
	\geq\sqrt{\lambda}(\lambda-\sqrt{\lambda})
	=\lambda^{\frac32}-\lambda
	>\lambda^{\frac32}-\frac54\lambda.
	\]
	
	\noindent\textbf{Subcase 2.1.2.}
	$|A^+|\geq\inc(w',S')$.
	
	By the choice of $w'$, we have
	$\inc(w',S')>\lambda-\sqrt{\lambda}$, and hence
	$|A^+|>\lambda-\sqrt{\lambda}$. It follows that
	\[
	|S|\geq\sum_{v\in A^+}\operatorname{inc}(v,S')
	>|A^+|(\lambda-\sqrt{\lambda})
	>(\lambda-\sqrt{\lambda})^2
	\geq\sqrt{\lambda}(\lambda-\sqrt{\lambda})
	=\lambda^{\frac32}-\lambda
	>\lambda^{\frac32}-\frac54\lambda.
	\]
	
	\noindent{\bf Subcase 2.2.}
	$|E_G(A^-,B^-)|\geq\lambda/4$.
	
	Let $A^*$ be the set of vertices of $A^-$ having at least one neighbor in $B^-$, and define $B^*$ symmetrically. Then
	\[
	|E_G(A^*,B^*)|=|E_G(A^-,B^-)|\geq\frac{\lambda}{4}.
	\]
	Thus
	$|A^*||B^*|\geq|E_G(A^*,B^*)|\geq\lambda/4$. By the arithmetic--geometric mean inequality,
	$|A^*|+|B^*|\geq2\sqrt{|A^*||B^*|}\geq\sqrt{\lambda}$. Interchanging $A^*$ and $B^*$ if necessary, assume
	$|A^*|\geq|B^*|$. Hence
	$|A^*|\geq\frac{|A^*|+|B^*|}{2}\geq\frac{\sqrt{\lambda}}{2}$.
	
	For each $v\in A^*$,
	$|N_{V_1}(v)|=d_G(v)-\inc(v,S')>\lambda-\sqrt{\lambda}$.
	Likewise, every $w\in B^*$ satisfies
	$|N_{V_2}(w)|>\lambda-\sqrt{\lambda}$.
	Choose $x'\in A^*$ for which $|N_{V_1}(x')|$ is minimum; equivalently,
	\[
	|N_{V_1}(x')|=\min\{|N_{V_1}(x)|:x\in A^*\}.
	\]
	
	\noindent\textbf{Subcase 2.2.1.}
	$|A^*|<|N_{V_1}(x')|$.
	
	First consider $|A^*|<\sqrt{\lambda}$. We then have
	\[
	\begin{aligned}
		|S|&\geq \sum_{\substack{v\in A^*\\ w\in N_{V_2}(v)\cap B^*}}|N_{V_2}(w)|+\sum_{\substack{w\in B^*\\ v\in N_{V_1}(w)\cap A^*}}(|N_{V_1}(v)|-|A^*|\bigr)\\
		&>|A^*|(\lambda-\sqrt{\lambda})
		+|B^*|\bigl(|N_{V_1}(x')|-|A^*|\bigr)\\
		&>|A^*|(\lambda-\sqrt{\lambda})
		+(\sqrt{\lambda}-|A^*|)(\lambda-\sqrt{\lambda}-|A^*|)\\
		&=\lambda^{\frac32}-\lambda-|A^*|\sqrt{\lambda}+|A^*|^2\\
		&=\lambda^{\frac32}
		+\left(|A^*|-\frac{\sqrt{\lambda}}{2}\right)^2-\frac54\lambda\\
		&\geq\lambda^{\frac32}-\frac54\lambda.
	\end{aligned}
	\]
	Now let $|A^*|\geq\sqrt{\lambda}$. The first family of crossing edges alone yields
	\[
	|S|\geq\sum_{\substack{v\in A^*\\ w\in N_{V_2}(v)\cap B^*}}|N_{V_2}(w)|
	>|A^*|(\lambda-\sqrt{\lambda})
	\geq\sqrt{\lambda}(\lambda-\sqrt{\lambda})
	=\lambda^{\frac32}-\lambda
	>\lambda^{\frac32}-\frac54\lambda.
	\]
	
	\noindent\textbf{Subcase 2.2.2.}
	$|A^*|\geq|N_{V_1}(x')|$.
	
	Since $|N_{V_1}(x')|>\lambda-\sqrt{\lambda}$, this subcase gives
	$|A^*|>\lambda-\sqrt{\lambda}$. Consequently,
	\[
	|S|\geq\sum_{\substack{v\in A^*\\ w\in N_{V_2}(v)\cap B^*}}|N_{V_2}(w)|
	>|A^*|(\lambda-\sqrt{\lambda})
	>(\lambda-\sqrt{\lambda})^2
	\geq\sqrt{\lambda}(\lambda-\sqrt{\lambda})
	=\lambda^{\frac32}-\lambda
	>\lambda^{\frac32}-\frac54\lambda.
	\]
	
	\noindent{\bf Subcase 2.3.}
	$|E_G(A^+,V_1-A^+)|\geq\lambda/4$ or
	$|E_G(B^+,V_2-B^+)|\geq\lambda/4$.
	
	Suppose first that
	$|E_G(A^+,V_1-A^+)|\geq\lambda/4$.
	Let $V'_1$ consist of those vertices in $V_1-A^+$ that have a neighbor in $A^+$. Then
	\[
	|E_G(A^+,V'_1)|=|E_G(A^+,V_1-A^+)|\geq \frac{\lambda}{4},
	\]
	and hence $|A^+|+|V'_1|\geq\sqrt{\lambda}$. Choose $y'\in A^+$ so that
	\[
	\inc(y',S')=\min\{\inc(y,S'):y\in A^+\}.
	\]
	Therefore,
	\[
	\begin{aligned}
		|S|
		&\geq \sum_{y\in A^+}\inc(y,S')+|V'_1|\inc(y',S')\\
		&\geq |A^+|\inc(y',S')+|V'_1|\inc(y',S')\\
		&>(|A^+|+|V'_1|)(\lambda-\sqrt{\lambda})\\
		&\geq \sqrt{\lambda}(\lambda-\sqrt{\lambda})\\
		&=\lambda^{\frac32}-\lambda\\
		&>\lambda^{\frac32}-\frac54\lambda.
	\end{aligned}
	\]
	
	The alternative
	$|E_G(B^+,V_2-B^+)|\geq\lambda/4$
	is obtained by the same argument with the two sides interchanged. Thus every case gives
	$|S|>\lambda^{\frac32}-\frac54\lambda$.
	The proof is complete. $\Box$
	
	\section{Degree sequence conditions for the square graph $G\sq$ to be maximally edge-connected}
	
	\begin{lem}\label{lem:interior-vertices-square}
	
		Let $G$ be a connected graph with $\lambda(G\sq)<\delta(G\sq)$. Assume $S$ is a minimum edge-cut of $G^2$, and denote the two components of $G^2-S$ by $H_1$ and $H_2$. Let $A_0$ and $B_0$ be the vertices of $H_1$ and $H_2$ that are not incident with any edge of $S$, respectively. Then
		$|A_0|\geq\delta(G)+1$ and $|B_0|\geq\delta(G)+1$.

	\end{lem}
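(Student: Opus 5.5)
The plan is to argue by contradiction: assume, say, $|A_0|\leq\delta(G)$ and show that $|S|\geq\delta(G^2)$, contradicting $|S|=\lambda(G^2)<\delta(G^2)$. By symmetry it is enough to treat $A_0$.

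\textbf{Step 1: $A_0\neq\emptyset$.} By the lemma of Hamidoune applied to $G^2$ (recorded in Section~2), $H_1$ contains an interior vertex, so $A_0$ is nonempty. Fix $x\in A_0$.

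\textbf{Step 2: local structure around $x$.} Since $x$ is incident with no edge of $S$, every vertex within distance two of $x$ in $G$ lies in $V_1$. In particular $N_G[x]\subseteq V_1$. Moreover, no neighbour $y$ of $x$ has a $G$-neighbour in $V_2$: such a neighbour would be at distance two from $x$, giving an edge of $S$ at $x$. Hence $N_G[x]\cap A'=\emptyset$, and all $d_{G^2}(x)\geq\delta(G^2)$ of its $G^2$-neighbours lie in $V_1$, so $|V_1|\geq\delta(G^2)+1$. If $N_G[x]\subseteq A_0$ we already have $|A_0|\geq\delta(G)+1$. So assume some $y\in N_G(x)$ lies in $A$.

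\textbf{Step 3: exploiting a vertex of $A'$.} Since $y\in A\setminus A'$, there is an edge $yz\in S$ with $z\in V_2$ and a path $y\,w\,z$ in $G$. Then $w\in V_1$, since otherwise $y$ would be in $A'$, and $w\in A'$ because $wz\in S'$. For any $w\in A'$ with $i=\inc(w,S')$ crossing $G$-edges, every vertex of $N_G[w]\cap V_1$ is $G^2$-adjacent to every vertex of $N_G(w)\cap V_2$. This yields
\[
|S|\geq i\,(d_G(w)-i+1).
\]
This product is at least $d_G(w)$ for $1\leq i\leq d_G(w)$. Combining it with the edges of $S$ at the other vertices of $A$ should give $|S|\geq\delta(G^2)$ unless many vertices are interior. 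The counting I intend to use is
\[
|S|\geq\sum_{a\in A}\bigl(\delta(G^2)-(|V_1|-1)\bigr),
\]
applied with $|V_1|=|A|+|A_0|$ and $|A|\leq|S|<\delta(G^2)$. Under $|A_0|\leq\delta(G)$ this should force $|S|\geq\delta(G^2)$.

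\textbf{Main obstacle.} The hardest step is the final counting. The bound from Step 3 only gives $|S|\geq d_G(w)\geq\delta(G)$, while we need $\delta(G^2)$, which can be much larger. The first thing I would try is to show that $\delta(G^2)\leq|A|+|A_0|-1+(\text{few})$ fails when $|A_0|\leq\delta(G)$. For this I would use that the $G^2$-neighbourhood of $w$ splits into a $V_1$-part and a $V_2$-part whose product is counted in $S$, and balance the two regimes (large $V_2$-part versus small $V_1$-part) in the same way as the $f(x)=(\lambda-x)x+x$ minimisation in the proof of the previous theorem.
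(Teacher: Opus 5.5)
There is a genuine gap, and it sits exactly where you flag it: the final counting is never supplied, and the one you write down cannot work. In Step 2 you showed $|V_1|\geq\delta(G^2)+1$. Hence every summand $\delta(G^2)-(|V_1|-1)$ in your proposed bound is at most $0$, and the inequality is vacuous. The local estimate $|S|\geq i\,(d_G(w)-i+1)$ is only of order $\delta(G)$. No balancing in the style of $f(x)=(\lambda-x)x+x$ will raise it to $\delta(G^2)$, so aiming directly at $|S|\geq\delta(G^2)$ is the wrong target.

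The missing idea is to compare $|S|$ with $|V_1|$ instead. Count the edges of $S$ by their endpoints in $V_1$. Every vertex of $A$ contributes at least one, so $|S|\geq|A|=|V_1|-|A_0|$. All you need is an extra $\delta(G)-1$ on top of this, because $|V_1|\geq\delta(G^2)+1\geq|S|+2$ then gives the contradiction.

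The paper gets this excess from a vertex $w\in V_2$ with $r\geq1$ $G$-neighbours in $V_1$ and $t\geq1$ in $V_2$. Such a $w$ is the vertex just before $V_1$ on a shortest $G$-path from $B_0$ to $V_1$. Each vertex of $N_G(w)\cap V_1$ is $G^2$-adjacent to $w$ and to all of $N_G(w)\cap V_2$, so it meets at least $t+1$ edges of $S$. Hence $|S|\geq|A|+rt\geq|V_1|-|A_0|+d_G(w)-1\geq|V_1|-1$ whenever $|A_0|\leq\delta(G)$.

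Your Step 3 vertex $w\in A'$, adjacent to $y\in V_1$ and $z\in V_2$, is such a mixed vertex, but it sits on the wrong side for bounding $A_0$. Counted from $V_1$, it only yields the excess $(d_G(w)-i+1)(i-1)$, which is $0$ when $i=1$. Counted from $V_2$, it yields $|S|\geq|B|+i\,(d_G(w)-i)\geq|V_2|-|B_0|+\delta(G)-1$. Together with $|V_2|\geq\delta(G^2)+1$, this proves the bound for $B_0$. The mirror-image vertex of $B'$ with a $G$-neighbour in $V_2$ then handles $A_0$, and the case split in Step 2 becomes unnecessary.
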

	
	\noindent{\bf Proof.}
	Let $V_i=V(H_i)$ for $i=1,2$. By Lemma~2.4, $|V_i|\geq\delta(G\sq)+1\geq |S|+2$ for $i=1,2$.  By Lemma~2.3,  $A_0$ and $B_0$ are both nonempty.
	
	Suppose $|A_0|\leq\delta(G)$. Take a shortest path $R$ in $G$ from $B_0$ to $V_1$, and let $w\in V_2$ be the vertex immediately preceding the first vertex of this path $R$ in $V_1$. Since no vertex of $B_0$ has a neighbor in $V_1$ in $G\sq$, the vertex $w$ has neighbors in both $V_1$ and $V_2$ in $G$. Set
	\[
	r=|N_G(w)\cap V_1|\geq1,
	\qquad
	t=|N_G(w)\cap V_2|\geq1.
	\]
	Then
	\[
	rt\geq r+t-1=d_G(w)-1\geq\delta(G)-1.
	\]
	
	For each $v\in N_G(w)\cap V_1$, the vertex $v$ is adjacent in $G\sq$ to $w$ and to every vertex of $N_G(w)\cap V_2$, and hence has at least $t+1$ incident edges belonging to $S$. Each of the remaining $|V_1|-|A_0|-r$ boundary vertices is incident with at least one edge of $S$. Therefore, counting the edges of $S$ by their endpoints in $V_1$, we obtain
	\[
	\begin{aligned}
		|S|
		&\geq r(t+1)+\bigl(|V_1|-|A_0|-r\bigr)\\
		&=|V_1|-|A_0|+rt\\
		&\geq |V_1|-|A_0|+\delta(G)-1\\
		&\geq |V_1|-1\\
		&\geq |S|+1,
	\end{aligned}
	\]
	where the penultimate inequality follows from $|A_0|\leq\delta(G)$. This is a contradiction. Hence $|A_0|\geq\delta(G)+1$. By symmetry, $|B_0|\geq\delta(G)+1$. $\Box$

     \begin{thm}\label{thm:degree-sequence-square}
     	Let $G$ be a connected graph on $n$ vertices with degree sequence
     	$d_1\geq d_2\geq\cdots\geq d_n$. Then $G^2$ is
     	maximally edge-connected provided that either
     	$\delta(G)\geq\lfloor(n+2)/4\rfloor$, or
     	$\delta(G)\leq\lfloor(n+2)/4\rfloor-1$ and
     	$\sum\limits_{i=1}^{2k}d_{n+1-i}\geq k(n-4)-2\delta(G)+3$
     	for some integer $k$ satisfying $2\leq k\leq\delta(G)+2$.
     \end{thm}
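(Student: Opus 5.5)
The plan is to handle the two alternatives separately. If $\delta(G)\geq\lfloor (n+2)/4\rfloor$, the conclusion is exactly Theorem~2.2 (Balbuena--Dankelmann), so nothing remains to prove. For the second alternative I argue by contradiction. Suppose $\delta(G)\leq\lfloor(n+2)/4\rfloor-1$, that the degree-sum inequality holds for some $k$ with $2\leq k\leq\delta(G)+2$, and that $\lambda(G^2)<\delta(G^2)$. Fix a minimum edge-cut $S$ of $G^2$ and use the notation $H_i,V_i,A,B,A_0,B_0,S',A',B'$ of Section~2. The aim is to exhibit $2k$ vertices whose degree sum is at most $k(n-4)-2\delta(G)+2$. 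Since the $2k$ smallest degrees sum to no more than this, we would get $\sum_{i=1}^{2k}d_{n+1-i}\leq k(n-4)-2\delta(G)+2$, contradicting the hypothesis.

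\textbf{Step 1: degree bound for interior vertices.} Let $x\in A_0$. Then $x$ has no $G^2$-neighbor in $V_2$, so every vertex within $G$-distance $2$ of $x$ lies in $V_1$. In particular no $G$-neighbor of $x$ has a $G$-neighbor in $V_2$, so $N_G(x)\subseteq V_1-A'-\{x\}$, which gives
\[
d_G(x)\leq |V_1|-1-|A'|.
\]
Symmetrically, $d_G(y)\leq |V_2|-1-|B'|$ for $y\in B_0$. Since $G$ is connected, $S'\neq\emptyset$, so $|A'|,|B'|\geq 1$.

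To sharpen this, take $w\in A'$ and let $t=|N_G(w)\cap V_1|$. Every vertex of $N_G[w]\cap V_1$ is within distance $1$ of $B'$, so it lies in $A$, not in $A_0$. Moreover, no vertex of $A_0$ can be adjacent to $w$ or to any vertex of $N_G(w)\cap V_1$, since such a vertex would be within distance $2$ of $V_2$. Hence interior vertices miss all of $N_G[w]\cap V_1$, not only $A'$. Because $d_G(w)\geq\delta(G)$ is split between $V_1$ and $V_2$, this should buy roughly $\delta(G)$ extra excluded vertices on the two sides together. That is the source of the $-2\delta(G)$ term.

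\textbf{Step 2: choose the $2k$ vertices.} By Lemma~\ref{lem:interior-vertices-square}, $|A_0|,|B_0|\geq\delta(G)+1$. For $k\leq\delta(G)+1$, I take $k$ vertices from $A_0$ and $k$ from $B_0$ and add the Step~1 bounds, which gives
\[
\sum d \leq k(|V_1|+|V_2|-2)-k(|A'|+|B'|)-(\text{extra exclusions}) = k(n-2)-k(|A'|+|B'|)-(\cdots).
\]
The extra exclusions must then be shown to contribute at least $2k+2\delta(G)-2$ beyond the crude bound $k(n-2)$. The case $k=\delta(G)+2$ needs one extra vertex on each side beyond the guaranteed $\delta(G)+1$ interior ones. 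For this I will use a boundary vertex of $A-A'$, whose $G$-neighbors all lie in $V_1$, or re-run the counting in the proof of Lemma~\ref{lem:interior-vertices-square}, which in fact gives slack $|S|\geq |V_1|-|A_0|+rt$ and hence more interior vertices once $|V_1|\geq|S|+2$. I will also use $|V_i|\geq\delta(G^2)+1$ from Lemma~2.4, together with the fact that $n$ is large relative to $\delta(G)$ in this case, to control $|V_1|$ and $|V_2|$.

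\textbf{Main obstacle.} I expect the hard part to be the exact constant, namely extracting the full $-2\delta(G)+2$ beyond the $-1$ per vertex coming from $|A'|,|B'|\geq1$. My first attempt will be the $w$-neighborhood argument from Step~1 on each side. This excludes $t+1$ vertices of $V_1$ from the neighborhoods of interior vertices, and similarly $r'+1$ vertices of $V_2$ for a vertex of $B'$. I will then combine this with $t+r\geq\delta(G)$ and a case split on whether $|A'|$ and $|B'|$ are $1$ or larger: a larger $A'$ directly lowers each interior degree, while a small $A'$ forces $w$ to have many neighbors on one side. If this still falls short by a constant, I will double-count $|S|$ from the $V_1$ side as in Lemma~\ref{lem:interior-vertices-square}. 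That count bounds $|A|$ in terms of $|S|<\delta(G^2)$, and $\delta(G^2)$ in turn can be bounded by the degree of an interior vertex in $G^2$, which is at most $|V_1|-1$; this should close the gap.
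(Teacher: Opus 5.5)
\textbf{There is a genuine gap, and it is where you expected the difficulty.} Your Step~1 sharpening is false. A vertex $x\in A_0$ may be adjacent to some $z\in N_G(w)\cap V_1$ with $z\notin A'$, because then $x$ is at distance $3$ from $V_2$, not $2$. The only valid constraint is $N_G(x)\subseteq V_1\setminus(A'\cup\{x\})$.

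More seriously, no choice of $2k$ interior vertices can reach the target $k(n-4)-2\delta(G)+2$. Consider the paper's sharpness graph $G_m=K_{m+1}+K_{m-1}+K_1+K_1+K_{m-1}+K_{m+1}$. It has $\delta=m\le\lfloor(n+2)/4\rfloor-1$ and $\lambda(G_m^2)=2m-1<\delta(G_m^2)$. To see this, note that every vertex lies in one of the two cliques of $G_m^2$ formed by the first three or the last three terms. Each has $2m+1$ vertices, so splitting either costs at least $2m$ edges. Hence the cut between the two middle $K_1$'s is the unique minimum cut $S$.

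For this $S$, write $A'=\{a\}$ for the third term. Then $A_0$ is the first $K_{m+1}$, and every interior vertex is adjacent to all of $N_G(a)\cap V_1$ (the first $K_{m-1}$). Each interior vertex has degree $2m-1=|V_1|-1-|A'|$. So any $2k$ interior vertices have degree sum exactly $k(n-4)$, which exceeds your target by $2m-2>0$. The ``extra exclusions'' you hope to find do not exist, and no refinement of Step~2 that uses only interior vertices can succeed.

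\textbf{The missing idea is to put low-degree boundary vertices into the $2k$-set.} Use the correct half of your observation, that no vertex of $A_0$ is adjacent to a vertex of $A'$, to bound the degree of the boundary vertex rather than the interior ones. Take a $G$-edge $uv$ with $u\in A'$ and $v\in B'$. Every $G$-neighbor of $u$ is within distance $2$ of $v$, so $N_G(u)\cap V_1$ avoids $A_0\cup\{u\}$, while $N_G(u)\cap V_2\subseteq B'$. Hence $d_G(u)\le |V_1|-|A_0|-1+|B'|$, and likewise $d_G(v)\le |V_2|-|B_0|-1+|A'|$.

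Now add $k-1$ interior vertices from each side. This is possible because $k-1\le\delta(G)+1$ and Lemma~4.1 applies, so the case $k=\delta(G)+2$ needs no special treatment. The total degree is
\[
kn-2k-|A_0|-|B_0|+(2-k)\bigl(|A'|+|B'|\bigr)\le k(n-4)-2\delta(G)+2,
\]
using $|A_0|+|B_0|\ge 2\delta(G)+2$, $|A'|+|B'|\ge 2$ and $k\ge 2$. So the $-2\delta(G)$ term comes from Lemma~4.1 entering through $d_G(u)$ and $d_G(v)$, not from shrinking interior degrees. In $G_m$ this choice attains the bound exactly.
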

    
    \noindent{\bf Proof.}
By Theorem~2.2,   $G^2$ is maximally edge-connected if
$\delta(G)\geq\lfloor(n+2)/4\rfloor$.
So we assume that
$\delta(G)\leq\lfloor(n+2)/4\rfloor-1$ and there is an integer $k$ with
$2\leq k\leq\delta(G)+2$ for which
$\sum\limits_{i=1}^{2k}d_{n+1-i}\geq k(n-4)-2\delta(G)+3$.
On the contrary, assume $\lambda(G^2)<\delta(G^2)$. Let $S$ be a
minimum edge-cut of $G^2$. Use $H_1$, $H_2$, $V_1$, $V_2$,
$A$, $B$, $A_0$, $B_0$, $S'$, $A'$ and $B'$ for the notations
associated with the edge-cut $S$ according to Section~2.
Lemma~\ref{lem:interior-vertices-square} implies
$|A_0|,|B_0|\geq\delta(G)+1$. By $G$ is connected, there is an edge $uv\in E_G(V_1,V_2)$ with $u\in A'$ and $v\in B'$.
	
	Choose $W\subseteq A_0$ and $T\subseteq B_0$ such that
	$|W|=|T|=k-1$. For every $x\in A_0$ and $y\in B_0$, $d_G(x)\leq |V_1|-|A'|-1$,
	$d_G(y)\leq |V_2|-|B'|-1$.
	Moreover, $d_G(u)\leq |V_1|-|A_0|-1+|B'|$, $d_G(v)\leq |V_2|-|B_0|-1+|A'|$.
	
	Let $Q=W\cup T\cup\{u,v\}$.
	Then $|Q|=2k$ and 
	\[
	\begin{aligned}
		\sum_{z\in Q}d_G(z)
		&\leq
		(k-1)(|V_1|-|A'|-1)
		+(k-1)(|V_2|-|B'|-1)\\
		&\quad
		+|V_1|-|A_0|-1+|B'|
		+|V_2|-|B_0|-1+|A'|\\
		&=
		kn-|A_0|-|B_0|-2k
		+(2-k)(|A'|+|B'|).
	\end{aligned}
	\]
	Since $|A_0|+|B_0|\geq2\delta(G)+2$,
	$|A'|+|B'|\geq2$ 
	and $k\geq2$, we obtain
	\[
	\sum_{z\in Q}d_G(z)\leq k(n-4)-2\delta(G)+2.
	\]
	On the other hand, since $Q$ contains $2k$ vertices,
	\[
	\sum_{z\in Q}d_G(z)
	\geq
	\sum\limits_{i=1}^{2k}d_{n+1-i}
	\geq
	k(n-4)-2\delta(G)+3,
	\]
	a contradiction. Therefore, $\lambda(G^2)=\delta(G^2)$, that is, $G^2$ is
     	maximally edge-connected.
	\hfill$\Box$

	The bound in Theorem~4.2 is sharp. Fix an integer $m\geq2$ and define $G_m=K_{m+1}+K_{m-1}+K_1+K_1+K_{m-1}+K_{m+1}$, where  ``$+$'' denotes the sequential sum: consecutive terms are joined completely, whereas no edges are placed between nonconsecutive terms. The graph $G_m$ has order $n=4m+2$ and minimum degree $\delta(G_m)=m$. Among its vertices, two have degree $m$, $2m+2$ have degree $2m-1$, and the remaining $2m-2$ have degree $2m$. Hence, whenever $2\leq k\leq m+2=\delta(G_m)+2$, the $2k$ smallest degrees consist of the two degrees equal to $m$ together with $2k-2$ degrees equal to $2m-1$. Therefore, $\sum\limits_{i=1}^{2k}d_{n+1-i}=2m+(2k-2)(2m-1)=k(n-4)-2\delta(G_m)+2$. On the other hand, although $\delta(G_m^2)=2m$, there is an edge cut of $G_m^2$ containing only $2m-1$ edges. It follows that $\lambda(G_m^2)\leq2m-1<2m=\delta(G_m^2)$, and consequently $G_m^2$ is not maximally edge-connected. Thus, the degree-sequence bound in Theorem~4.2 cannot be weakened by replacing the term $+3$ with $+2$.

	\begin{thm}\label{thm:degree-sequence-square}
		Let $G$ be a connected graph of order $n$ with degree sequence
		$d_1\geq d_2\geq\cdots\geq d_n$. If $\delta=\delta(G)\geq2$ and
		$d_{n-\delta}+d_n\geq(3n-9)/4$, then
		$G^2$ is maximally edge-connected.
	\end{thm}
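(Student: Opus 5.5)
We argue by contradiction, in the same setting as Theorem~4.2. Suppose $\lambda(G^2)<\delta(G^2)$ and let $S$ be a minimum edge-cut of $G^2$, with $H_1,H_2,V_1,V_2,A,B,A_0,B_0,S',A',B'$ as in Section~2. By Lemma~\ref{lem:interior-vertices-square} we have $|A_0|\geq\delta+1$ and $|B_0|\geq\delta+1$. Since $G$ is connected, $S'\neq\emptyset$, so $A'$ and $B'$ are nonempty. The two basic estimates are:
\begin{itemize}
\item every $x\in A_0$ has $N_G(x)\subseteq V_1-A'$, so $d_G(x)\leq |V_1|-|A'|-1$;
\item every $y\in B_0$ has $d_G(y)\leq |V_2|-|B'|-1$.
\end{itemize}
Note that $A_0$ and $B_0$ together contain at least $2\delta+2$ vertices.

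The first step is to locate the two degrees $d_n$ and $d_{n-\delta}$. The vertex of degree $d_n$ lies somewhere, but we do not need it explicitly. Since $d_{n-\delta}$ is the $(\delta+1)$-st smallest degree, any set of $\delta+1$ vertices contains a vertex of degree at least $d_{n-\delta}$. Apply this to $A_0$ and to $B_0$. This yields $x\in A_0$ with $d_{n-\delta}\leq d_G(x)\leq |V_1|-|A'|-1$, and likewise $y\in B_0$ with $d_{n-\delta}\leq |V_2|-|B'|-1$.

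For $d_n$, I would use the vertex $w$ of minimum degree, $d_n\leq d_G(w)$, through a bound valid for some specific vertex. Concretely, take an edge $uv\in S'$ with $u\in A'$ and $v\in B'$. Then $d_n\leq d_G(u)\leq |V_1|-|A_0|-1+|B'|$, and symmetrically for $v$. More simply, $d_n\leq d_G(x)$ for $x\in A_0$ or $B_0$, and whichever side is smaller gives the bound.

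Adding these inequalities should give the contradiction. Let $|V_1|\leq |V_2|$, so $|V_1|\leq n/2$. From the first step and from $d_n\leq d_G(x')$ for any $x'\in A_0$,
\[
d_{n-\delta}+d_n\leq 2(|V_1|-|A'|-1)\leq n-2|A'|-2 .
\]
This is too weak, since it only gives $n-4$ versus $(3n-9)/4$. So we need a genuinely square-graph bound on $|V_1|$. The key extra ingredient is that $|S|<\delta(G^2)$, together with the counting in Lemma~\ref{lem:interior-vertices-square}: each boundary vertex of $V_1$ is incident with an edge of $S$, and each $u\in A'$ adjacent to $v\in B'$ forces roughly $|N_G(u)\cap V_1|\cdot|N_G(u)\cap V_2|$ edges of $S$. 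Meanwhile $\delta(G^2)\leq$ the $G^2$-degree of a minimum-degree vertex. I would aim to show $|V_1|-|A_0|$ is small, namely $|A-A'|+|A'|\leq |S|$. Combined with $|S|<\delta(G^2)\leq |N_G[N_G(x)]|-1\leq |V_1|-1$ for $x\in A_0$, this gives the statement that the interior is large and the boundary is at most about $|V_1|/2$. It also bounds $d_G(x)\leq |V_1|-|A'|-1$ more sharply via $|A'|\geq$ something.

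The main obstacle is extracting the factor $3/4$. I expect it to come from combining the two estimates $d_{n-\delta}\leq |V_1|-|A'|-1$ (with $|V_1|\leq n/2$) and $d_n\leq$ the degree of an $A'$-vertex, which is $\leq |V_1|-|A_0|-1+|B'|$ with $|A_0|\geq\delta+1$. Then $|B'|$ is controlled by $|S|$, and $|S|<\delta(G^2)\leq$ the $G^2$-degree of an interior vertex. I would try first the case split $|A'|$ small versus large. If $|A'|\geq n/8$-ish, the first bound alone gives $d_{n-\delta}+d_n<(3n-9)/4$. Otherwise, $u\in A'$ has few neighbours across, and $d_n\leq d_G(u)$ is at most about $n/4$, since $d_G(u)\leq|V_1|-|A_0|-1+|B'|$ and $|A_0|\geq\delta+1\geq 3$ (using $\delta\geq2$). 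Summing with $d_{n-\delta}\leq n/2-|A'|-1$ and optimizing should yield at most $(3n-10)/4$, contradicting the hypothesis. The constant $-9$ and the assumption $\delta\geq2$ will be tuned in this final arithmetic, including parity of $n$.
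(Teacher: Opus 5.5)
There is a genuine gap. The inequality that is supposed to contradict the hypothesis is only anticipated (``optimizing should yield at most $(3n-10)/4$''), and the concrete steps you offer toward it fail.

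\begin{itemize}
\item In the case $|A'|\geq n/8$, your first estimate gives $d_{n-\delta}+d_n\leq 2(n/2-n/8-1)=(3n-8)/4$, which is not below $(3n-9)/4$.
\item In the complementary case, the claim that $d_G(u)$ is at most about $n/4$ is unsupported. Even using $|A_0|\geq\delta+1$ and $d_G(u)\geq\delta$, the bound $d_G(u)\leq|V_1|-|A_0|-1+|B'|$ rearranges only to $d_n\leq n/4-1+|B'|/2$. Nothing in your argument bounds $|B'|$ by a constant: you have only $|B'|\leq|S|$, and you have not excluded $|S|$ of order $n/2$.
\item More fundamentally, suppose a single boundary vertex $u$ has $r$ neighbours in $V_1$ and $t$ in $V_2$. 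The edges of $S$ you can attribute to $u$ number $t(r+1)$, and this can be as small as $d_G(u)$. Take $r=0$, which your choice of $u$ as an arbitrary end of an edge of $S'$ does not exclude, or take $t=1$. Combined with your correct upper bound $|S|\leq\delta(G^2)-1\leq|V_1|-2\leq n/2-2$, this gives only $d_n\leq n/2-2$. That is no better than the estimate you already discarded.
\end{itemize}

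The missing ingredient is a two-vertex lower bound $|S|\geq d_G(u)+d_G(v)-1\geq 2d_n-1$. The paper obtains it as follows.
\begin{itemize}
\item It takes $u\in A'$ with a $G$-neighbour in $V_1-A'$ and $v\in B'$ with a $G$-neighbour in $V_2-B'$. These are the ends of shortest paths from interior vertices to $A'$, resp.\ $B'$.
\item It partitions their neighbourhoods into common neighbours $X_i$, private neighbours across the cut $Y_i$, and private same-side neighbours $Z_i$. The choice of $u,v$ gives $z_1,z_2\geq 1$.
\item It counts pairwise disjoint families of crossing $G^2$-edges, separately for $uv\in E(G)$ and $uv\notin E(G)$.
\end{itemize}
It then concludes via Chartrand's theorem (Theorem~2.1) applied to $G^2$. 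The hypothesis, together with $d_{n-\delta}\leq d_G(w)\leq n/2-2$ for a maximum-degree vertex $w\in A_0$, gives $d_G(u)+d_G(v)\geq 2d_n\geq(n-1)/2$. Hence $\delta(G^2)\geq|S|+1\geq(n-1)/2$, a contradiction.

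Your guess that $d_n$ should be about $n/4$ is the right target. Once the two-vertex bound is in hand, your own estimate $|S|\leq n/2-2$ finishes just as well as Chartrand's theorem. From $2d_n-1\leq n/2-2$ you get $d_n\leq(n-2)/4$. Adding $d_{n-\delta}\leq n/2-2$ gives $d_{n-\delta}+d_n\leq(3n-10)/4<(3n-9)/4$. So the upper-bound half of your plan is sound. What is absent is the lower bound on $|S|$, and that is where essentially all of the work lies.
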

	
	\noindent{\bf Proof.}  
	Assume, on the contrary, that $\lambda(G^2)<\delta(G^2)$. Choose a minimum edge-cut $S$ of $G^2$, and retain the notations $H_1$, $H_2$, $V_1$, $V_2$, $A$, $B$, $A_0$, $B_0$, $S'$, $A'$ and $B'$ be defined as in Section~2. By interchanging the two components if necessary, we may suppose that $|V_1|\leq n/2$ and $|V_2|\geq n/2$.

	By Lemma~2.3, the component $H_1$ contains an interior
	vertex $t$. Hence $t\notin A$ and, consequently, $t\notin A'$.
	Moreover, the definition of $A'$ implies that
	$N_G[t]\subseteq V_1\setminus A'$. Among all paths in $G$ joining $t$ to a vertex of $A'$, choose one of minimum length and denote it by $R$. Let $u\in A'$ be the end of $R$ contained in $A'$, and let $y$ be the vertex immediately preceding $u$ along $R$. By the minimality of $R$, we have
	$y\in V_1- A'$. Therefore,
	$y\in N_G(u)\cap(V_1-A')$, and hence
	$|N_G(u)\cap(V_1-A')|\geq1$.
	Repeating the above reasoning for $V_2-B'$, we can choose a vertex 
	$v\in B'$ with 
	$|N_G(v)\cap(V_2-B')|\geq1$.
	
	Choose a vertex $w\in A_0$ with maximum degree in $G$ among all vertices of $A_0$. Since
	$|A_0|\geq\delta(G)+1$, the ordering of the degree sequence
	implies that $d_G(w)\geq d_{n-\delta}$.   Moreover, because
	$w\in A_0$, all neighbors of $w$ in $G$ lie in $V_1-A'$. Since $G$ is connected, $E_G(V_1,V_2)\neq\varnothing$, and hence  $|A'|\geq1$.
	Consequently,
	$d_G(w)\leq |V_1|-|A'|-1\leq n/2-2$.
	
	Since $d_G(u),d_G(v)\geq d_n=\delta$ and
	$d_G(w)\geq d_{n-\delta}$, the hypothesis of the theorem gives
	\begin{align*}
		d_G(u)+d_G(v)+2d_G(w)
		&\geq 2d_n+2d_{n-\delta}\\
		&=2\bigl(d_n+d_{n-\delta}\bigr)\\
		&\geq\frac{3n-9}{2}.
	\end{align*}
	Using $d_G(w)\leq n/2-2$, we therefore obtain
	\begin{align*}
		d_G(u)+d_G(v)
		&\geq\frac{3n-9}{2}-2d_G(w)\\
		&\geq\frac{3n-9}{2}-(n-4)\\
		&=\frac{n-1}{2}.
	\end{align*}

	\medskip
	\noindent
	\textbf{Claim 1.}
	$|S|<d_G(u)+d_G(v)-1$.
	
	\smallskip
	
	Suppose otherwise that $|S|\geq d_G(u)+d_G(v)-1\geq(n-1)/2-1$. Since
	$|S|=\lambda(G^2)<\delta(G^2)$ and both quantities are integers, we
	have
	$\delta(G^2)\geq |S|+1\geq(n-1)/2$. Applying Theorem~2.1 to $G^2$,
	we obtain $\lambda(G^2)=\delta(G^2)$, contrary to our assumption.
	This proves Claim~1.

	We now partition the relevant neighborhoods of $u$ and $v$. Define
	\begin{align*}
		X_1&=N_G(u)\cap N_G(v)\cap V_1, &
		X_2&=N_G(u)\cap N_G(v)\cap V_2,\\
		Y_1&=\bigl(N_G(u)\cap B'\bigr)\setminus X_2, &
		Y_2&=\bigl(N_G(v)\cap A'\bigr)\setminus X_1,\\
		Z_1&=\bigl(N_G(u)\cap V_1\bigr)\setminus X_1, &
		Z_2&=\bigl(N_G(v)\cap V_2\bigr)\setminus X_2.
	\end{align*}
	For $i=1,2$, write
	$|X_i|=x_i$, $|Y_i|=y_i$, and $|Z_i|=z_i$.
	By the choices of $u$ and $v$, we have $z_1\geq1$ and $z_2\geq1$.
	
	We distinguish two cases according to whether $u$ and $v$ are
	adjacent in $G$.
	
	\medskip
	\noindent
	\textbf{Case 1.}
	$uv\in E(G)$.
	
	\smallskip
	
	Since $u\in A'\subseteq V_1$ and $v\in B'\subseteq V_2$, the edge
	$uv$ implies that $v\in Y_1$ and $u\in Y_2$. In particular,
	$y_1\geq1$ and $y_2\geq1$.
	
	\smallskip
	\noindent
	\textbf{Subcase 1.1.}
	$x_1+x_2=0$.
	
	\begin{figure}[H]
		\centering
		\includegraphics[width=0.5\textwidth]{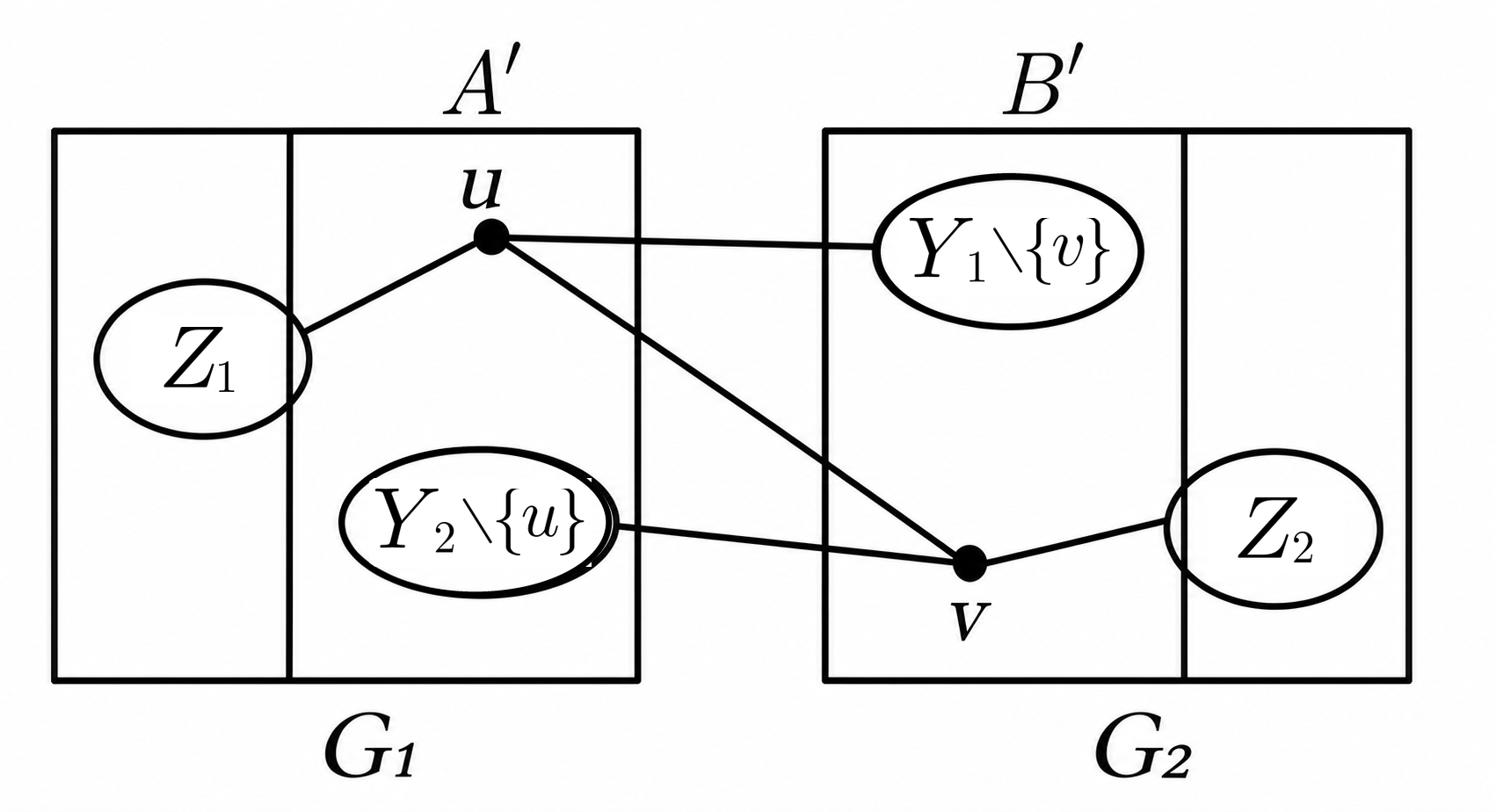}
  \caption{The illustration of $uv\in E(G)$ and $x_1+x_2=0$}
		\label{fig:figure1.1}
	\end{figure}
	
	In this subcase,
	$d_G(u)+d_G(v)=y_1+y_2+z_1+z_2$.
	Counting the edges of $G^2$ crossing from $V_1$ to $V_2$ between the
	sets defined above, and subtracting the edge $uv$, which is otherwise
	counted twice, gives
	$|S|\geq y_1+y_2+z_1y_1+z_2y_2-1$.
	Consequently,
	\begin{align*}
		|S|-d_G(u)-d_G(v)+1
		&\geq y_1+y_2+z_1y_1+z_2y_2-1-z_1-z_2-y_1-y_2+1\\
		&=z_1(y_1-1)+z_2(y_2-1)\\
		&\geq0.
	\end{align*}
	Thus, $|S|\geq d_G(u)+d_G(v)-1$, contradicting Claim~1.
	
	\smallskip
	\noindent
	\textbf{Subcase 1.2.}
	$x_1+x_2>0$.
	
	\begin{figure}[H]
		\centering
		\includegraphics[width=0.5\textwidth]{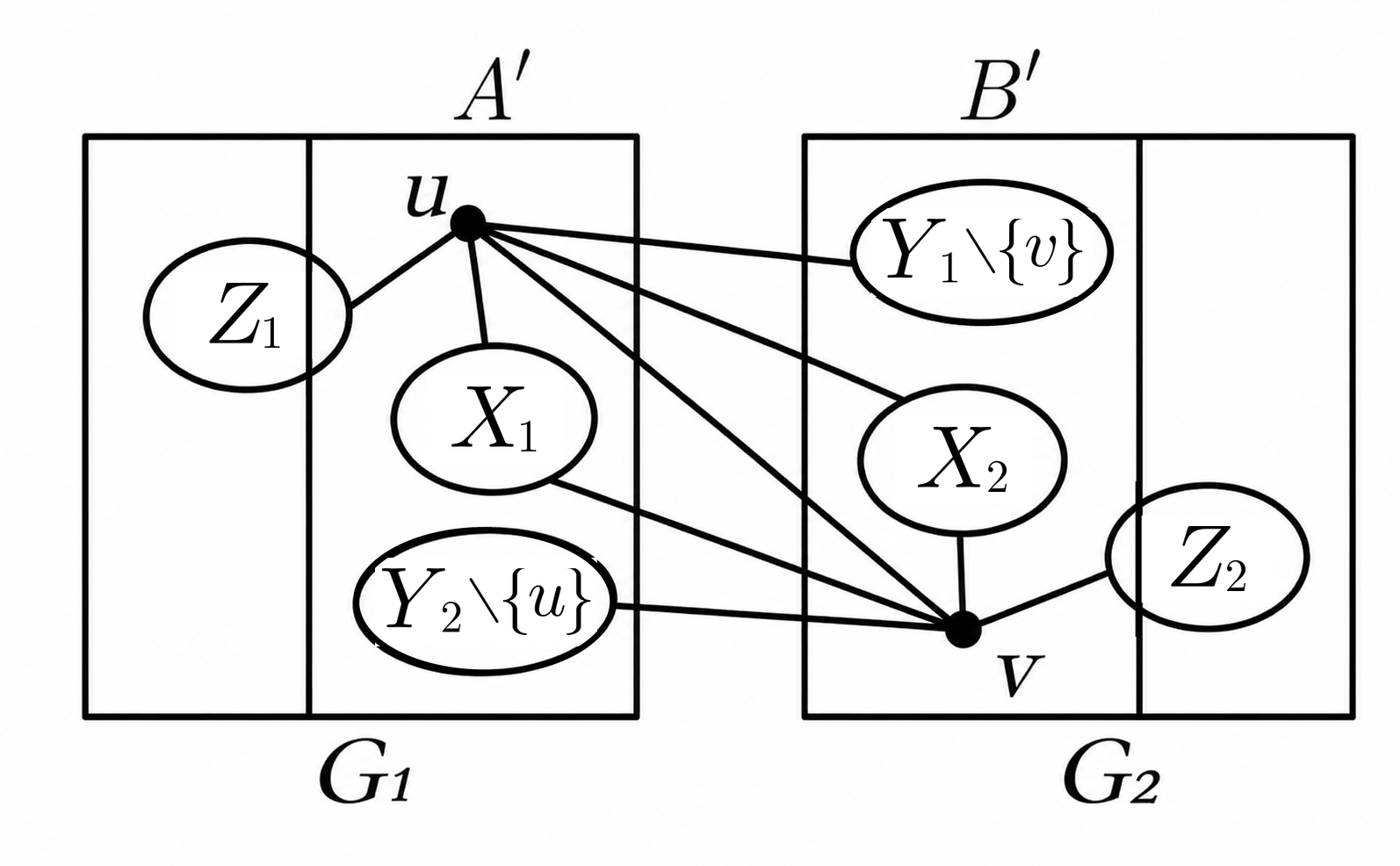}
  \caption{The illustration of $uv\in E(G)$ and $x_1+x_2>0$}
		\label{fig:figure1.2}
	\end{figure}
	
	Here,
	$d_G(u)+d_G(v)=2x_1+2x_2+y_1+y_2+z_1+z_2$.
	Counting the relevant pairwise disjoint families of crossing edges in
	$G^2$ yields
	\begin{align*}
		|S|\geq{}&
		z_1(y_1+x_2)+x_1(x_2+y_1+z_2)
		+(y_2-1)(1+x_2+z_2)+y_1+x_2+z_2\\
		={}&
		y_1+y_2+z_1y_1+z_1x_2+x_1x_2+x_1z_2
		+y_2x_2+z_2y_2+x_1y_1-1.
	\end{align*}
	
	After subtracting $d_G(u)+d_G(v)-1$ and collecting terms, we obtain
	\begin{align*}
		|S|-d_G(u)-d_G(v)+1
		\geq{}&
		(y_2-1)z_2+(y_1-1)z_1+(z_2+y_1-2)x_1+(z_1+y_2-2)x_2+x_1x_2.
	\end{align*}
	Since $y_1,y_2,z_1,z_2\geq1$, the right-hand side is a sum of nonnegative terms. Thus, $|S|\geq d_G(u)+d_G(v)-1$, also contradicting Claim~1.
	
	\medskip
	\noindent
	\textbf{Case 2.}
	$uv\notin E(G)$.
	
	\smallskip
	\noindent
	\textbf{Subcase 2.1.}
	$x_1+x_2=0$.
	
	\begin{figure}[H]
		\centering
		\includegraphics[width=0.5\textwidth]{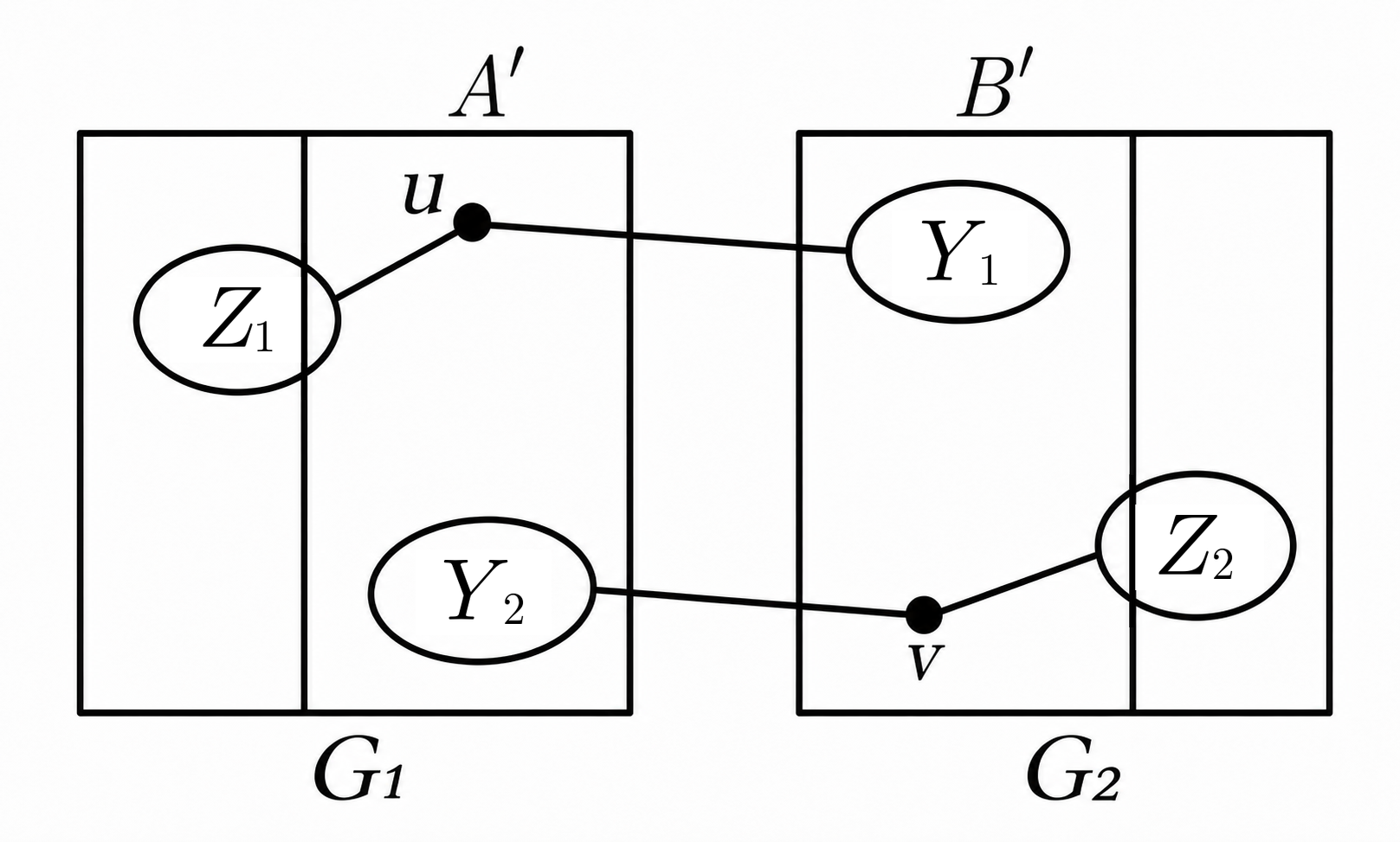}
      \caption{The illustration of $uv\notin E(G)$ and $x_1+x_2=0$}
		\label{fig:figure2.1}
	\end{figure}
	
	In this subcase,
	$d_G(u)+d_G(v)=y_1+y_2+z_1+z_2$.
	Since $u$ and $v$ are nonadjacent and have no common neighbor, the
	choices of $u$ and $v$ imply that $y_1\geq1$ and $y_2\geq1$.
	Counting the corresponding crossing edges gives
	$|S|\geq y_1+y_2+z_1y_1+z_2y_2$.
	It follows that
	\begin{align*}
		|S|-d_G(u)-d_G(v)+1
		&\geq y_1+y_2+z_1y_1+z_2y_2-z_1-z_2-y_1-y_2+1\\
		&=z_1(y_1-1)+z_2(y_2-1)+1\\
		&\geq1.
	\end{align*}
	Thus, $|S|\geq d_G(u)+d_G(v)-1$, which contradicts  Claim~1.
	
	\smallskip
	\noindent
	\textbf{Subcase 2.2.}
	$x_1+x_2>0$.
	
	\begin{figure}[H]
		\centering
		\includegraphics[width=0.5\textwidth]{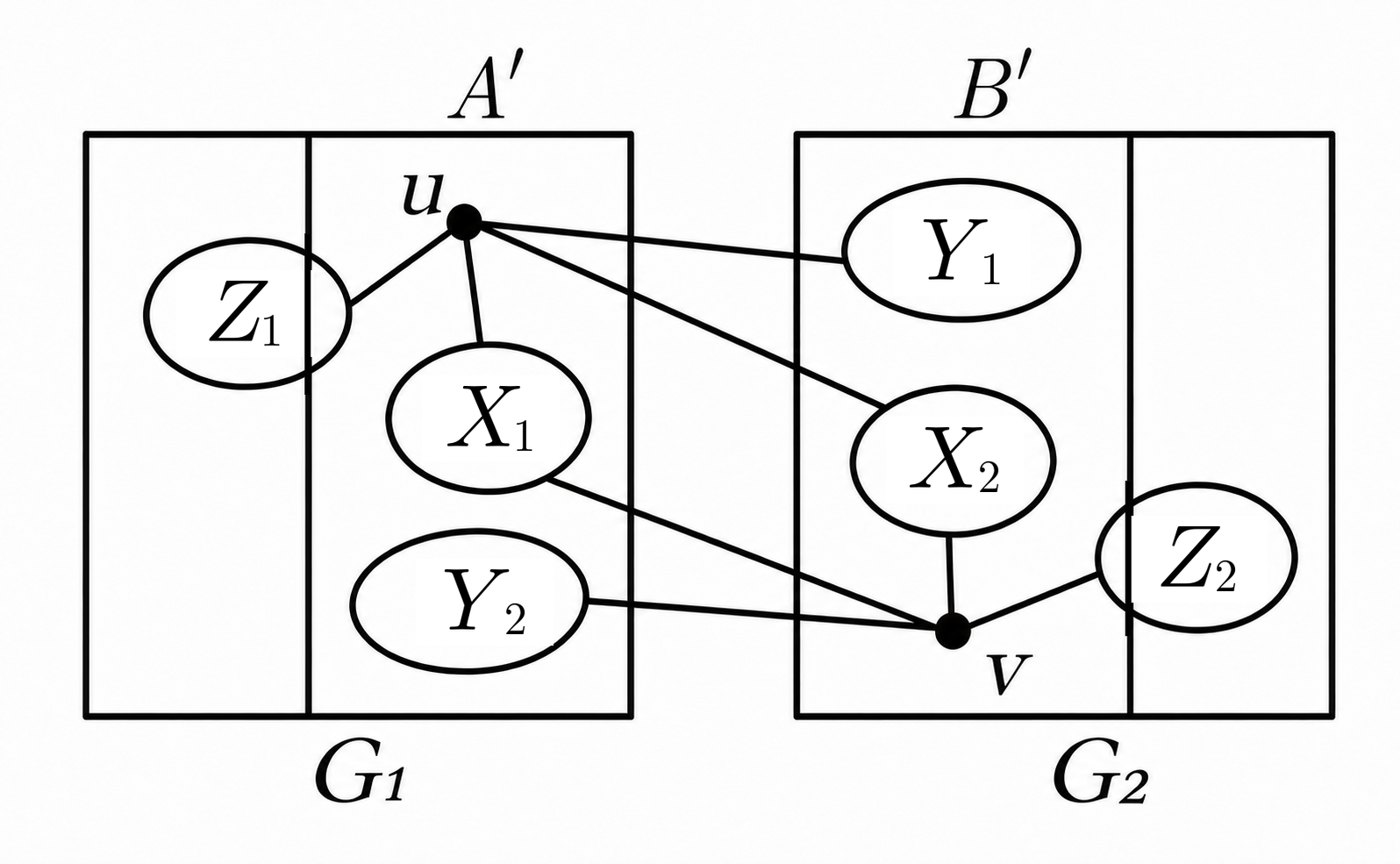}
       \caption{The illustration of $uv\notin E(G)$ and $x_1+x_2>0$}
		\label{fig:figure2.2}
	\end{figure}
	
	Once again,
	$d_G(u)+d_G(v)=2x_1+2x_2+y_1+y_2+z_1+z_2$.
	Counting the corresponding crossing edges of $G^2$ yields
	
	\begin{align*}
		|S|\geq{}&
		y_1+x_2+1+z_1(y_1+x_2)
		+x_1(x_2+y_1+1+z_2)
		+y_2(1+x_2+z_2)\\
		={}&
		y_1+y_2+x_1+x_2+z_1y_1+z_1x_2+x_1x_2+x_1z_2
		+y_2x_2+z_2y_2+x_1y_1+1.
	\end{align*}
	
	Therefore,
	\begin{align*}
		|S|-d_G(u)-d_G(v)+1
		\geq{}&
		(y_2-1)x_2+(y_1-1)x_1+(x_2+y_1-1)z_1+(x_1+y_2-1)z_2+x_1x_2+2.
	\end{align*}
	
	Since $x_1+x_2\neq0$, there are three possibilities.
	
	If $x_1\geq1$ and $x_2=0$, then the existence of a common neighbor in
	$V_1$ implies $y_1\geq1$. Hence
	\begin{align*}
		|S|-d_G(u)-d_G(v)+1
		\geq{}&
		(y_1-1)x_1+(y_1-1)z_1+(x_1+y_2-1)z_2+x_1x_2+2\geq2.
	\end{align*}
	
	If $x_2\geq1$ and $x_1=0$, then, symmetrically, $y_2\geq1$, and
	\begin{align*}
		|S|-d_G(u)-d_G(v)+1
		\geq{}&
		(y_2-1)x_2+(x_2+y_1-1)z_1+(y_2-1)z_2+x_1x_2+2\geq2.
	\end{align*}
	
	Finally, suppose that $x_1\geq1$ and $x_2\geq1$. Rearranging the
	preceding lower bound gives
	\begin{align*}
		|S|-d_G(u)-d_G(v)+1
		\geq{}&
		(x_2+y_1-1)z_1+(x_1+y_2-1)z_2+(x_1-1)(x_2-1)+y_1x_1+y_2x_2+1\geq1.
	\end{align*}
	Thus, $|S|\geq d_G(u)+d_G(v)-1$, which also contradicts Claim~1.
	
	All cases lead to contradiction. Therefore,
	$\lambda(G^2)=\delta(G^2)$, that is, $G^2$ is maximally edge-connected. $\Box$
	
  Sharpness of the bound in Theorem~\ref{thm:degree-sequence-square} is witnessed by the following family of graphs. Fix an integer $k\geq2$ and define
  $G_k=K_{k+1}+K_{k-1}+K_1+K_1+K_{k-1}+K_{k+1}$, where ``$+$'' denotes the sequential sum. For this construction, the order is $n=4k+2$, the minimum degree is $\delta(G_k)=d_n=k$, and $d_{n-\delta}=2k-1$. Hence,
  $d_{n-\delta}+d_n=3k-1=(3n-10)/4$.
  The square graph $G_k^2$ has minimum degree $2k$. On the other hand, the edges running between the third and fourth terms form a cut of size $2k-1$, and therefore
  $\lambda(G_k^2)\leq2k-1<2k=\delta(G_k^2)$. Consequently, the value $3k$ in Theorem~\ref{thm:degree-sequence-square} is best possible; the theorem no longer holds with $3k-1$ in its place.  Equivalently, the integer threshold determined by Theorem~4.3 is best possible.

	\section{Conclusions}
	
This paper concerns how edge-connectivity behaves under the square operation on graphs. First we establish a lower bound for $\lambda(G^2)$ in terms of  $\lambda(G)$, which extends the result in \cite{ChenXu} from range $4\leq\lambda(G)\leq7$ to $\lambda(G)\geq4$. Then we derive two conditions involving the degree sequence that guarantee $G^2$ is maximally edge-connected. In addition, examples show that  our two degree sequence conditions
are best possible.  These results enlarge the collection of known criteria for maximal edge-connectedness of square graphs and supplement the earlier work in \cite{BalbuenaDankelmann,ChenXu}. Further investigation may address whether the lower bound can be extended to higher graph powers and whether analogous criteria can be obtained from other graph parameters, including inverse degree \cite{DankelmannHellwigVolkmannInverse}, girth, diameter, and forbidden subgraphs \cite{SoneokaNakadaImasePeyrat}.

\end{document}